\documentclass[a4paper,12pt]{article}
\usepackage{amssymb,amsmath,amsthm,amsfonts,amsbsy,latexsym}
\usepackage{graphicx}
\usepackage[pdftex,bookmarks,colorlinks=false]{hyperref}
\usepackage[font=small,labelfont=bf]{caption}
\usepackage{subcaption}
\usepackage[hmargin=1.2in,vmargin=1.2in]{geometry}
\usepackage{mathrsfs}

\usepackage[mathscr]{euscript}

\usepackage[auth-lg]{authblk}
\setcounter{Maxaffil}{4}

\def\ni{\noindent}
\newcommand{\N}{\mathbb{N}}
\newcommand{\J}{\mathscr{J}}

\pagestyle{myheadings}
\thispagestyle{empty}
\markboth {\hspace*{-9mm} \centerline{\footnotesize 
Johan Colouring of Graph Operations}
}
{ \centerline {\footnotesize 
J. Kok and N. K. Sudev} \hspace*{-9mm}}

\newtheorem{theorem}{Theorem}[section]
\newtheorem{proposition}[theorem]{Proposition}
\newtheorem{corollary}[theorem] {Corollary}
\newtheorem{definition}[theorem]{Definition}
\newtheorem*{example}{Example}

\newtheorem{lemma}{Lemma}[section]

\title{\bf Johan Colouring of Graph Operations}

\author{\bf Johan Kok$^{\ast}$, Sudev Naduvath$^\dagger$}
\affil{\small Centre for Studies in Discrete Mathematics\\ Vidya Academy of Science and Technology\\ Thalakkottukara, Thrissur - 680501, India.\\ {\tt $^{\ast}$kokkiek2@tshwane.gov.za,\\ \tt $^\dagger$sudevnk@gmail.com}}

\date{}

\begin{document}
\maketitle

\begin{abstract}
A vertex $v$ of a given graph is said to be in a rainbow neighbourhood of $G$ if every colour class of $G$ consists of at least one vertex from the closed neighbourhood $N[v]$. A maximal proper colouring of a graph $G$ is a Johan colouring if and only if every vertex of $G$ belongs to a rainbow neighbourhood of $G$. In general all graphs need not have a Johan colouring, even though they admit a chromatic colouring. In this paper, we characterise graphs which admit a Johan colouring. We also discuss some preliminary results in respect of certain graph operations which admit a Johan colouring under certain conditions.
\end{abstract}

\ni \textbf{Keywords:} Johan colouring, modified Johan colouring, rainbow neighbourhood.

\vspace{0.25cm}

\ni \textbf{Mathematics Subject Classification:} 05C15, 05C38, 05C75, 05C85.

\section{Introduction}
For general notations and concepts in graphs and digraphs we refer to \cite{BM1,FH,DBW}. For further definitions in the theory of graph colouring, see \cite{CZ1,JT1}. Unless specified otherwise, all graphs mentioned in this paper are simple, connected  and undirected graphs. 

The degree of a vertex $v \in V(G)$ is the number of edges in $G$ incident with $v$ and is denoted $d_G(v)$ or when the context is clear, simply as $d(v)$. Also, unless mentioned otherwise, the graphs we consider in this paper has the order $n$ and size $p$ with minimum and maximum degree $\delta$ and $\Delta$, respectively. 

Recall that if $\mathcal{C}= \{c_1,c_2,c_3,\dots,c_\ell\}$ and $\ell$ sufficiently large, is a set of distinct colours, a proper \textit{vertex colouring} of a graph $G$ is a vertex colouring $\varphi:V(G) \mapsto \mathcal{C}$ such that no two distinct adjacent vertices have the same colour. The cardinality of a minimum set of colours which allows a proper vertex colouring of $G$ is called the \textit{chromatic number} of $G$ and is denoted by $\chi(G)$. When a vertex colouring is considered with colours of minimum subscripts, the colouring is called a \emph{minimum parameter} colouring. Unless stated otherwise, all colourings in this paper are minimum parameter colour sets. 

The number of times a colour $c_i$ is allocated to vertices of a graph $G$ is denoted by $\theta(c_i)$ and $\varphi:v_i \mapsto c_j$ is abbreviated, $c(v_i) = c_j$. Furthermore, if $c(v_i) = c_j$ then $\iota(v_i) = j$. The colour class of a colour $c_i$, denoted by ${\cal C}_i$, is the set of vertices of $G$ having the same colour $c_i$.

We shall also colour a graph in accordance with the Rainbow Neighborhood Convention (see \cite{KSJ}), which is stated as follows.

\textbf{Rainbow Neighbourhood Convention:} (\cite{KSJ}) For a proper colouring $\mathcal{C} =\{c_1,c_2,c_3,\dots,c_\ell\}$, $\chi(G) = \ell$, we always colour maximum possible number of vertices with the colour $c_1$, then colour the maximum possible number of remaining vertices by the colour $c_2$  and proceeding like this and finally colour the remaining vertices by the colour $c_\ell$. Such a colouring is called a $\chi^-$-colouring of a graph.

The inverse to the convention requires the mapping $c_j\mapsto c_{\ell -(j-1)}$. Corresponding to the inverse colouring we define $\iota'(v_i) = \ell-(j-1)$ if $c(v_i) = c_j$.  The inverse of a $\chi^-$-colouring is called a $\chi^+$-colouring.

The closed neighbourhood $N[v]$ of a vertex $v \in V(G)$ which contains at least one coloured vertex of each colour in the chromatic colouring, is called a \textit{rainbow neighbourhood}. That is, a vertex $V$ is said to be in a rainbow neighbourhood if ${\cal C}_i\cap N[v]\ne \emptyset$, for all $1\le i\le \chi(G)$. Two types of vertex colourings in terms of rainbow neighbourhoods have been introduced in \cite{NKS} as follows.

\begin{definition}\label{Def-1.1}{\rm 
\cite{NKS} A maximal proper colouring of a graph $G$ is a \textit{Johan colouring} of $G$, denoted by $J$-colouring, if and only if every vertex of $G$ belongs to a rainbow neighbourhood of $G$. The maximum number of colours in a $J$-colouring is called the \textit{$J$-chromatic number} of $G$, denoted by $\J(G)$.}
\end{definition}

\begin{definition}\label{Def-1.21}{\rm 
\cite{NKS} A maximal proper colouring of a graph $G$ is a \textit{modified Johan colouring}, denoted by $J^*$-colouring, if and only if every internal vertex of $G$ belongs to a rainbow neighbourhood of $G$. The maximum number of colours in a $J^*$-colouring is denoted by $\J^*(G)$.}
\end{definition}

In this paper, we characterise the graphs which admit Johan colouring. We also discuss some preliminary results in respect of certain graph operations which admit a Johan colouring under certain conditions. 

\section{New Directions}

A \textit{null graph} on $n$ vertices is an edgeless graph and is denoted by $\mathfrak{N}_n$. We follow the convention that $\J(\mathfrak{N}_n)=\J^*(\mathfrak{N}_n)=1,\ n \in \N$. Also, note that for any graph $G$ which admits a $J$-colouring, we have $\chi(G) \leq \J(G)$. 

Note that if a graph $G$ admits a $J$-colouring, it also admits a $J^*$-colouring. However, the converse need not be true always. It can also be noted that if graph $G$ has no pendant vertex and it admits a $J$-colouring, then $\J(G)=\J^*(G)$.

In view of the above mentioned concepts and facts, we have the following theorem.

\begin{theorem}\label{Thm-2.1}
If $G$ is a tree of order $n\geq 2$, then $\J(G)<\J^*(G)$.
\end{theorem}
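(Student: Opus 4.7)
The plan is to pin down $\J(T)$ exactly and then exhibit a $J^*$-colouring using strictly more colours. For the first step, I would show $\J(T)=2$. Since $T$ is a bipartite graph with at least one edge we have $\chi(T)=2$, and the bipartition colouring is a $J$-colouring: each vertex and its neighbour lie in different parts, so both colour classes appear in every closed neighbourhood. Conversely, any tree of order $n\geq 2$ has a pendant vertex $\ell$ with $|N[\ell]|=2$; in any proper colouring $\ell$ and its unique neighbour receive distinct colours, and so $\ell$ can belong to a rainbow neighbourhood only if at most two colours are used in total. Hence $\J(T)=2$.

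Next, I would construct a $J^*$-colouring using three colours, assuming $T$ contains an internal vertex (equivalently, $n\geq 3$). Root $T$ at any leaf $r$ and define $c(v)=(d_T(r,v)\bmod 3)+1$. Adjacent vertices differ in distance from $r$ by exactly $1$, so their colours differ mod $3$, making $c$ a proper three-colouring (colours $1,2,3$ are all realised because for $n\geq 3$ there exists a vertex at distance $2$ from $r$). For any internal vertex $w$, we have $w\ne r$, and in the rooted tree $w$ has a parent at distance $d_T(r,w)-1$ and, since $d_T(w)\geq 2$, at least one child at distance $d_T(r,w)+1$. The three consecutive integers $d_T(r,w)-1$, $d_T(r,w)$, $d_T(r,w)+1$ represent all three residues mod $3$, so $N[w]$ contains all three colours. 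The root $r$ is a leaf, so its rainbow condition is vacuous. Hence $c$ is a valid $J^*$-colouring and $\J^*(T)\geq 3>2=\J(T)$.

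The main difficulty is finding the right construction for the lower bound on $\J^*(T)$. A naive bipartition-plus-local-recolouring introduces a third colour only locally and breaks down on trees with internal vertices far from the recoloured region, whereas the modular-distance scheme handles every tree uniformly. Rooting at a leaf is also important: an internal root would have no parent, so its closed neighbourhood would carry only two colours and the argument would require an ad-hoc repair. The borderline case $n=2$ (where $T=K_2$ has no internal vertex) lies outside this construction and would have to be inspected separately against the $J^*$-colouring definition.
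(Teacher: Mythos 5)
Your proof is correct for every tree that has an internal vertex, and it follows essentially the same two-step strategy as the paper: a pendant vertex has only two vertices in its closed neighbourhood, which pins $\J(T)$ at $2$, and an explicit cyclic $3$-colouring in which every internal vertex sees a neighbour at distance $d-1$ and one at distance $d+1$ from a fixed leaf gives $\J^*(T)\ge 3$. Your formula $c(v)=(d_T(r,v)\bmod 3)+1$ with $r$ a leaf is in effect a closed-form packaging of the paper's construction (colour a diametral path $c_1,c_2,c_3,c_1,\dots$ from a pendant end and continue the cycle along each branch), so the content is the same, but your version dispenses with the paper's induction over branches and avoids its incidental claim that $\J^*(G)\le 3$, which is contradicted by the star $K_{1,n}$ example the paper itself gives immediately afterwards. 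The caveat you flag at $n=2$ is real, but it is a defect of the statement rather than of your argument: $K_2$ has no internal vertex and only two vertices, so a literal reading of the definitions gives $\J^*(K_2)=2=\J(K_2)$, and the paper's own proof likewise tacitly assumes an internal vertex exists; you are, if anything, more careful than the source on this boundary case.
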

\begin{proof}
A tree $G$ of order $n \geq 2$ has at least two pendant vertices, say $u$ and $v$. Therefore, the maximum number of colours which will allow both vertices $u$ and $v$ to yield rainbow neighbourhoods is $\chi(G)=2$. Therefore, $G$ admits a $J$-colouring and $\J(G)=2$.

Any internal vertex $w$ of $G$ has $d(w)\geq 2$. Therefore, $\J^*(G) \leq 3$. Consider any diameter path of $G$ say $P_{diam(G)}$. Beginning at a pendant vertex of the diameter path, label the vertices consecutively $v_1,v_2,v_3,\ldots,v_{diam(G)}$. Colour the vertices consecutively $c(v_1)=c_1$, $c(v_2)=c_2$, $c(v_3)=c_3$, $c(v_4) = c_1$, $c(v_5) = c_2$, $c(v_6) = c_3$ and so on such that

\begin{eqnarray}
c(v_{diam(G)})=1; & \text{if}\ diam(G) \equiv 1({\rm mod}~3)\\
c(v_{diam(G)})=2; & \text{if}\ diam(G) \equiv 2({\rm mod}~3)\\
c(v_{diam(G)})=3; & \text{if}\ diam(G) \equiv 0({\rm mod}~3).
\end{eqnarray}

Clearly, in respect of path $P_{diam(G)}$, it is a proper colouring and all internal vertices yield a rainbow neighbourhood on $3$ colours. Consider any maximal path starting from, say $v \in V(P_{diam(G)})$. Hence, $v$ is a pendant vertex to that maximal path. Colour the vertices consecutively from $v$ as follows:

\begin{enumerate}
\item[(a)] If $c(v) = c_1$ in $P_{diam(G)}$, colour as $c_1,c_2,c_3,c_1,c_2,c_3,\ldots,\underbrace{c_1~\text{or}~c_2~\text{or}~c_3}$
\item[(b)] If $c(v) = c_2$ in $P_{diam(G)}$, colour as $c_2,c_3,c_1,c_2,c_3,c_1,\cdots,\underbrace{c_2~\text{or}~c_3~\text{or}~c_1}$.
\item[(c)] If $c(v) = c_3$ in $P_{diam(G)}$, colour as $c_3,c_1,c_2,c_3,c_1,c_2,\cdots,\underbrace{c_3~\text{or}~c_1~\text{or}~c_2}$.
\end{enumerate}

It follows from mathematical induction that all maximal branching can receive such colouring which remains a proper colouring with all internal vertices $v\in V(G)$ having $|c(N[v])|=3$. Furthermore, all nested branching can be coloured in a similar way until all vertices of $G$ are coloured. Therefore, $\J^*(G) \geq 3$. Hence, $\J(G) < \J^*(G)$. 
\end{proof}

An easy example to illustrate Theorem \ref{Thm-2.1} is the star $K_{1,n},\ n \geq 2$ for which $\J(K_{1,n}) =2 < n+1 = \J^*(K_{1,n})$. This example prompts the next results.

\begin{corollary}
For any graph $G$ which admits a $J^*$-colouring, we have $\J^*(G) \leq \Delta(G)+1$.
\end{corollary}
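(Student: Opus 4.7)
The plan is to use directly the definition of a $J^*$-colouring: every internal vertex must sit in a rainbow neighbourhood, so every colour class must be represented inside its closed neighbourhood. This forces the total number of colours to fit inside some $N[v]$.

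First I would pick any internal vertex $v$ of $G$, that is a vertex with $d(v)\geq 2$. For a connected graph of order $n \geq 3$ such a vertex always exists; the corner cases $\mathfrak{N}_n$ and $K_2$ have $\J^*=1$ and $\J^*=2$ respectively, and in both we trivially have $\J^*(G) \leq \Delta(G)+1$ (using the stated convention $\J^*(\mathfrak{N}_n)=1$ and $\Delta(K_2)+1=2$), so I would dispose of them in a short opening sentence.

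Next, since the colouring under consideration is a $J^*$-colouring, the chosen internal vertex $v$ belongs to a rainbow neighbourhood. Hence $N[v]$ meets every colour class of $G$, which gives at least one distinct vertex per colour inside $N[v]$. Therefore the number of colours used is at most $|N[v]|=d(v)+1$. Since $d(v)\leq \Delta(G)$, I obtain $\J^*(G)\leq d(v)+1 \leq \Delta(G)+1$, which is the claimed bound.

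There is essentially no obstacle here: the argument is a one-line pigeonhole on $N[v]$. The only subtlety worth stating explicitly is why an internal vertex exists in the first place (hence the separate mention of the $\mathfrak{N}_n$ and $K_2$ cases), and the fact that the bound is achieved at a vertex realising $\Delta(G)$ whenever that vertex happens to be internal, so no further optimisation is needed.
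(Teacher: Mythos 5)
Your proposal is correct and is essentially the argument the paper intends: the paper states this corollary without an explicit proof (it is ``prompted'' by the star example), and your pigeonhole observation --- an internal vertex $v$ must lie in a rainbow neighbourhood, so every colour class meets $N[v]$ and hence $\J^*(G)\leq |N[v]|=d(v)+1\leq \Delta(G)+1$ --- is exactly the same reasoning the paper uses (with $\delta(G)$ in place of $d(v)$) in the proof of the very next corollary. Your brief handling of the degenerate cases with no internal vertex ($K_1$, $K_2$, and the null graph convention) is a sensible addition and does not change the approach.
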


\begin{corollary}
If $\J^*(G) > \J(G)$ for a graph $G$, then $G$ has at least one pendant vertex.
\end{corollary}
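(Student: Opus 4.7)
The plan is to proceed by contrapositive: I would show that if $G$ has no pendant vertex, then $\J^*(G) \le \J(G)$. Combined with the elementary inequality $\J(G) \le \J^*(G)$ (noted earlier in the section, since every $J$-colouring of $G$ is automatically a $J^*$-colouring), this yields $\J(G)=\J^*(G)$ under the pendant-free hypothesis, and the corollary follows by contrapositive.

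The key step would be to take any $J^*$-colouring of $G$ realising $\J^*(G)$ colours and to exploit the definitional asymmetry between $J$- and $J^*$-colourings. Under the pendant-free assumption, every $v\in V(G)$ satisfies $d(v)\ge 2$ and is therefore an internal vertex. By Definition \ref{Def-1.21}, every internal vertex of $G$ already lies in a rainbow neighbourhood under this colouring; since in our situation \emph{every} vertex is internal, the same colouring in fact satisfies the stronger requirement of Definition \ref{Def-1.1}, and so it is already a $J$-colouring of $G$ using $\J^*(G)$ colours. Hence $\J(G)\ge \J^*(G)$, as needed.

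The only delicate point, and what I would flag as the main obstacle if one is being pedantic, is that the hypothesis $\J^*(G)>\J(G)$ tacitly presumes that $\J(G)$ is defined, i.e.\ that $G$ admits a $J$-colouring at all. The contrapositive argument disposes of this automatically: in the pendant-free case the chosen maximum $J^*$-colouring is itself a witness that $G$ admits a $J$-colouring, so there is no separate existence issue to address. Beyond this observation, the proof reduces to little more than unpacking Definitions \ref{Def-1.1} and \ref{Def-1.21}.
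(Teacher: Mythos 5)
Your argument is correct, and it follows the same contrapositive skeleton as the paper: assume $G$ has no pendant vertex, observe that then every vertex is internal, and conclude that the $J$- and $J^*$-parameters cannot differ. The difference lies in how that conclusion is reached. The paper argues via a degree bound: it asserts that when every vertex is internal, any maximal proper colouring satisfying the rainbow condition is forced to have exactly $\delta(G)+1$ colours, so that both parameters are pinned to the same value; as stated this is imprecise, since $\delta(G)+1$ is in general only an upper bound (even cycles of length not divisible by $3$ have $\J(C_n)=2<\delta+1$), although the underlying intent --- that the constraint from a minimum-degree vertex is the same for both colourings --- is sound. You instead unpack Definitions \ref{Def-1.1} and \ref{Def-1.21} directly: when all vertices are internal the two defining conditions coincide verbatim, so a maximum $J^*$-colouring is itself a $J$-colouring, giving $\J(G)\ge\J^*(G)$ and hence the contrapositive at once. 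Your route is cleaner, avoids the questionable equality $|\mathcal{C}|=\delta(G)+1$, and your remark that the chosen $J^*$-colouring simultaneously witnesses that $\J(G)$ is defined handles an existence point the paper leaves implicit; it also amounts to an actual proof of the paper's earlier unproved remark that pendant-free graphs admitting a $J$-colouring satisfy $\J(G)=\J^*(G)$. The only cosmetic caveat is the trivial case of a connected graph on one vertex, where no vertex has degree at least $2$, but there $\J=\J^*=1$ holds by convention, so nothing breaks.
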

\begin{proof}
Since all $v\in V(G)$ are internal vertices and any vertex $u$ for which $d(u)=\delta(G)$ must yield a rainbow neighbourhood, it follows that any maximal proper colouring $\mathcal{C}$ are bound to $|\mathcal{C}|=|N[u]|=\delta(G)+1$. Therefore, if $\J^*(G)>\J(G)$, then $G$ has at least one pendant vertex.
\end{proof}

In \cite{KSJ}, the rainbow neighbourhood number $r_\chi(G)$ is defined as the number of vertices of $G$ which yield rainbow neighbourhoods. It is evident that not all graphs admit a $J$-colouring. Then, we have

\begin{lemma}\label{Lem-2.5}
\begin{enumerate}\itemsep0mm
\item[(i)] A maximal proper colouring $\varphi:V(G) \mapsto \mathcal{C}$ of a graph $G$ which satisfies a graph theoretical property, say $\mathfrak{P}$, can be minimised to obtain a minimal proper colouring which satisfies $\mathfrak{P}$.
\item[(ii)] A minimal proper colouring $\varphi:V(G) \mapsto \mathcal{C}$ of a graph $G$ which satisfies a graph theoretical property, say $\mathfrak{P}$, can be maximised to obtain a maximal proper colouring which satisfies $\mathfrak{P}$.
\end{enumerate}
\end{lemma}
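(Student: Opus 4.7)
The plan is to prove both parts by explicit iterative colour--class modification procedures, relying on the finiteness of $V(G)$ to force termination. For part~(i), given a maximal proper colouring $\varphi:V(G)\to\mathcal{C}$ with $|\mathcal{C}|=k$ satisfying $\mathfrak{P}$, I would repeatedly search for an ordered pair of distinct colour classes $(\mathcal{C}_i,\mathcal{C}_j)$ whose union is independent in $G$ and for which the merge operation (reassigning every vertex of $\mathcal{C}_j$ the colour $c_i$ and then renumbering the surviving colours) yields a proper colouring still satisfying $\mathfrak{P}$. Each successful merge strictly decreases the number of colours, which is bounded below by $\chi(G)$, so only finitely many merges can be performed. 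The colouring reached when no such merge remains available is, by construction, a proper colouring satisfying $\mathfrak{P}$ admitting no further $\mathfrak{P}$-preserving merge, which is exactly the notion of a minimal proper colouring satisfying $\mathfrak{P}$.

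For part~(ii) I would apply the dual procedure. Starting from a minimal proper colouring $\varphi$ with $|\mathcal{C}|=k$ satisfying $\mathfrak{P}$, I would repeatedly look for a colour class $\mathcal{C}_i$ together with a nonempty proper subset $S\subsetneq \mathcal{C}_i$ such that assigning every vertex of $S$ a fresh colour $c_{k+1}$ produces a proper colouring still satisfying $\mathfrak{P}$. Each successful split strictly increases the colour count, which is bounded above by $|V(G)|=n$, so only finitely many splits can be performed. The terminal colouring admits no further $\mathfrak{P}$-preserving split and is therefore a maximal proper colouring satisfying $\mathfrak{P}$.

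The main obstacle is conceptual rather than computational: one might worry that at some intermediate stage no legal merge or split preserving $\mathfrak{P}$ is available, so that the procedure halts prematurely. My proposed framing sidesteps this, because the words \emph{minimal} and \emph{maximal} are interpreted precisely as the termination conditions of the respective procedures, rather than in terms of attaining $\chi(G)$ or some predetermined upper bound. Once this interpretation is fixed, the lemma reduces to the observation that strictly monotone chains of proper colourings of a finite graph are finite, and that $\mathfrak{P}$ is preserved along every step we elect to take. A brief remark would also confirm that the starting colouring itself qualifies as the output in the degenerate case that no legal first operation exists.
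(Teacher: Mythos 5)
There is a genuine gap, and it is exactly the point you describe as being ``sidestepped''. In the paper, \emph{minimal} and \emph{maximal} are meant globally: the paper's own proof compares $\mathcal{C}$ against an arbitrary colour set $\mathcal{C}'$ with $|\mathcal{C}'|<|\mathcal{C}|$ (resp.\ $|\mathcal{C}'|>|\mathcal{C}|$) admitting a proper colouring satisfying $\mathfrak{P}$, and the application in Theorem~\ref{Thm-2.6} relies on this: there the ``minimality of $\mathcal{C}'$'' is contradicted by exhibiting a $\chi^-$-colouring, i.e.\ a completely different colouring with fewer colours, not a merge of two classes of $\varphi$. Your procedure only establishes that the terminal colouring admits no single $\mathfrak{P}$-preserving merge (resp.\ split of one colour class); that is a local optimum with respect to your chosen moves, and it need not have the minimum (resp.\ maximum) number of colours among all proper colourings satisfying $\mathfrak{P}$. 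Concretely, for the rainbow-neighbourhood property it can happen that no split of any single class of a $2$-colouring preserves $\mathfrak{P}$, while a differently structured $3$-colouring does satisfy it; your split procedure then halts at $2$ colours although the maximum is $3$. Redefining ``minimal/maximal'' as the termination condition of the procedure changes the statement being proved, and the weaker statement does not support the use made of the lemma later in the paper.

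The repair is much simpler than your machinery and is essentially what the paper does: the set of integers $\{|\mathcal{C}''| : \varphi'':V(G)\mapsto\mathcal{C}''\ \text{is a proper colouring satisfying}\ \mathfrak{P}\}$ is nonempty (the given $\varphi$ witnesses membership) and finite (every value is at most $|V(G)|$), hence it has a least and a greatest element; any colouring attaining the least (resp.\ greatest) value is the required minimal (resp.\ maximal) proper colouring satisfying $\mathfrak{P}$. No merging or splitting of classes of the given colouring is needed, and no claim is made that the extremal colouring is reachable from $\varphi$ by local moves.
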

\begin{proof}
\begin{enumerate}\itemsep0mm
\item[(i)] Consider a maximal proper colouring $\varphi:V(G) \mapsto \mathcal{C}$ of a graph $G$ which satisfies a graph theoretical property say, $\mathfrak{P}$. If a minimum colour set $\mathcal{C}'$, with $|\mathcal{C}'| < |\mathcal{C}|$, such that a minimal proper colouring $\varphi':V(G) \mapsto \mathcal{C}'$ which satisfies the graph theoretical property $\mathfrak{P}$ cannot be found, then $|\mathcal{C}|$ is minimum.
\item[(ii)] Consider a minimal proper colouring $\varphi:V(G) \mapsto \mathcal{C}$ of a graph $G$ which satisfies a graph theoretical property say, $\mathfrak{P}$. If a maximum colour set $\mathcal{C}'$, $|\mathcal{C}'| > |\mathcal{C}|$, such that a maximal proper colouring $\varphi':V(G) \mapsto \mathcal{C}'$ which satisfies the graph theoretical property $\mathfrak{P}$ cannot be found, then $|\mathcal{C}|$ is maximum.
\end{enumerate}
\end{proof}

The following theorem characterises those graphs which admit a $J$-colouring.

\begin{theorem}
A graph $G$ of order $n$ admits a $J$-colouring if and only if $r_\chi(G) = n$.
\end{theorem}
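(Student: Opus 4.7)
The plan is to establish both directions using Lemma \ref{Lem-2.5}, taking the graph-theoretical property $\mathfrak{P}$ to be ``every vertex of $G$ belongs to a rainbow neighbourhood''. The key observation is that $r_\chi(G) = n$ exactly says there is a \emph{minimal} (i.e.\ chromatic) proper colouring of $G$ satisfying $\mathfrak{P}$, while a $J$-colouring is by definition a \emph{maximal} proper colouring satisfying $\mathfrak{P}$. Thus the theorem reduces to a translation between minimal and maximal colourings satisfying the same property, which is exactly what Lemma \ref{Lem-2.5} supplies.

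For the forward direction, I would assume $G$ admits a $J$-colouring $\varphi:V(G)\mapsto \mathcal{C}$. By Definition \ref{Def-1.1}, every vertex of $G$ belongs to a rainbow neighbourhood under $\varphi$, so $\varphi$ satisfies $\mathfrak{P}$. Applying Lemma \ref{Lem-2.5}(i) to $\varphi$ with property $\mathfrak{P}$, we obtain a minimal proper colouring $\varphi':V(G)\mapsto \mathcal{C}'$ still satisfying $\mathfrak{P}$. Minimality forces $|\mathcal{C}'|=\chi(G)$, so $\varphi'$ is a chromatic colouring under which every vertex of $G$ lies in a rainbow neighbourhood. Hence $r_\chi(G)=n$.

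For the converse, suppose $r_\chi(G)=n$. Then there exists a chromatic (hence minimal) proper colouring $\varphi$ under which every vertex yields a rainbow neighbourhood, so $\varphi$ satisfies $\mathfrak{P}$. Applying Lemma \ref{Lem-2.5}(ii) to $\varphi$ with property $\mathfrak{P}$ produces a maximal proper colouring $\varphi'$ which still satisfies $\mathfrak{P}$, i.e.\ every vertex of $G$ still lies in a rainbow neighbourhood. By Definition \ref{Def-1.1}, $\varphi'$ is a $J$-colouring of $G$.

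The main conceptual point (and potential subtlety) is ensuring that the property $\mathfrak{P}$ is genuinely preserved under the minimisation/maximisation promised by Lemma \ref{Lem-2.5}; strictly speaking one should note that when we add or remove colours in passing between $\varphi$ and $\varphi'$, we do so in a manner consistent with Lemma \ref{Lem-2.5}, and then verify that introducing additional colour classes (or identifying some classes) in a way that keeps the property ``all vertices are in a rainbow neighbourhood'' intact is precisely what the lemma asserts. Modulo that invocation, the proof is essentially immediate from the definitions.
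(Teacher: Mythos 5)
Your proof takes essentially the same route as the paper: both directions invoke Lemma \ref{Lem-2.5} with the property $\mathfrak{P}$ being ``every vertex lies in a rainbow neighbourhood'', minimising a $J$-colouring to a chromatic colouring that witnesses $r_\chi(G)=n$ and maximising such a chromatic colouring to a $J$-colouring. Your assertion that ``minimality forces $|\mathcal{C}'|=\chi(G)$'' is stated rather than argued, but the paper's own proof glosses over exactly the same point, so the two arguments are at the same level of rigour.
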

\begin{proof}
If $r_\chi(G)=n$, then every vertex of $G$ belongs to a rainbow neighbourhood. Hence, either the chromatic colouring $\varphi:V(G) \mapsto \mathcal{C}$ is maximal or a maximal colouring $\varphi': V(G)\mapsto \mathcal{C}'$ exists.

An immediate consequence of Definition \ref{Def-1.1} is that if graph $G$ admits a $J$-colouring then each vertex $v\in V(G)$ yields a rainbow neighbourhood. This consequence also follows from the the result that for any connected graph $G$, $\J(G)\le \delta(G)+1$ (see \cite{NKS}). Hence, from Lemma \ref{Lem-2.5} it follows that either the $J$-colouring is minimal or a minimal colouring $\varphi':V(G) \mapsto \mathcal{C}'$ exists such that $r_\chi(G) = n$.
\end{proof}

The following theorem establishes a necessary and sufficient condition for a graph $G$ to have a $J$-colouring with respect to a $\chi^-$-colouring of $G$.

\begin{theorem}\label{Thm-2.6}
A graph $G$ admits a $J$-colouring if and only if each $v \in V(G)$ yields a rainbow neighbourhood with respect to a $\chi^-$-colouring of $G$.
\end{theorem}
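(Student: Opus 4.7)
The plan is to treat this theorem as a direct restatement of the preceding characterisation (that $G$ admits a $J$-colouring if and only if $r_\chi(G)=n$), and to use Lemma~\ref{Lem-2.5} to pass between maximal proper colourings (i.e., $J$-colourings) and minimal proper colourings (i.e., $\chi^-$-colourings). Throughout, I take the graph-theoretic property $\mathfrak{P}$ to be ``every vertex of $G$ lies in a rainbow neighbourhood.''

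For the forward implication, assume $G$ admits a $J$-colouring $\varphi$. By Definition~\ref{Def-1.1}, $\varphi$ is a maximal proper colouring of $G$ that satisfies $\mathfrak{P}$. Applying Lemma~\ref{Lem-2.5}(i) then yields a minimal proper (equivalently, a chromatic) colouring $\varphi'$ of $G$ that continues to satisfy $\mathfrak{P}$. A short verification then shows that $\varphi'$ may be brought into accordance with the Rainbow Neighbourhood Convention so as to constitute a $\chi^-$-colouring under which every vertex still lies in a rainbow neighbourhood.

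For the converse implication, suppose $\varphi$ is a $\chi^-$-colouring of $G$ under which every vertex lies in a rainbow neighbourhood. Being a chromatic colouring, $\varphi$ is a minimal proper colouring of $G$ satisfying $\mathfrak{P}$. Applying Lemma~\ref{Lem-2.5}(ii), I then obtain a maximal proper colouring $\varphi^*$ of $G$ which still satisfies $\mathfrak{P}$; by Definition~\ref{Def-1.1}, this $\varphi^*$ is a $J$-colouring of $G$.

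The principal obstacle I anticipate is the alignment step in the forward direction: the minimal colouring $\varphi'$ produced by Lemma~\ref{Lem-2.5}(i) is a priori only a chromatic colouring, not necessarily one satisfying the ordering prescribed by the Rainbow Neighbourhood Convention. To close this gap one has to verify that, whenever some chromatic colouring satisfying $\mathfrak{P}$ exists, the greedy reallocation prescribed by the convention can be performed without destroying $\mathfrak{P}$. This should follow from the observation that $\mathfrak{P}$ depends only on the partition of $V(G)$ into colour classes and not on the particular labels attached to those classes, together with a routine reshuffling argument carried out among those chromatic colourings that already satisfy $\mathfrak{P}$.
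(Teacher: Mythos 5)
Your converse direction (from the $\chi^-$-colouring hypothesis to the existence of a $J$-colouring via Lemma~\ref{Lem-2.5}(ii)) is exactly the paper's argument and is fine. The gap is in your forward direction, and it is genuine. Lemma~\ref{Lem-2.5}(i) only gives a proper colouring that is \emph{minimal among colourings satisfying} $\mathfrak{P}$ (every vertex in a rainbow neighbourhood); it does not give a colouring with $\chi(G)$ colours. Your parenthetical ``minimal proper (equivalently, a chromatic) colouring'' silently identifies these two notions, but a priori the smallest number of colours for which $\mathfrak{P}$ can hold could exceed $\chi(G)$: one cannot simply merge colour classes of a $\mathfrak{P}$-colouring to reach $\chi(G)$ colours, since distinct classes may be joined by edges, and an arbitrary re-colouring down to $\chi(G)$ colours need not preserve $\mathfrak{P}$. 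This identification is precisely what the paper's own converse argument labours to establish: it sets $\mathcal{C}'$ equal to the minimal $\mathfrak{P}$-preserving colour set, supposes a $\chi^-$-colouring $\mathcal{C}$ with $|\mathcal{C}|<|\mathcal{C}'|$, and derives a contradiction from the existence of a non-adjacent pair $u,w\in N(v)$ with $c(u)=c(w)$. Whatever one thinks of that argument's rigour, it is an essential step that your proposal replaces with an assertion.

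Your second anticipated obstacle is also understated. Conforming to the Rainbow Neighbourhood Convention is not a matter of relabelling colour classes: a $\chi^-$-colouring prescribes the \emph{partition} itself (greedily maximising $\theta(c_1)$, then $\theta(c_2)$, and so on), so a chromatic colouring satisfying $\mathfrak{P}$ may have a class structure incompatible with the convention, and passing to the convention-compliant partition could in principle destroy the rainbow property at some vertex. Saying that $\mathfrak{P}$ ``depends only on the partition and not on the labels'' is true but does not address this, because the convention may force a different partition. Until you supply the argument that the rainbow property survives the greedy reallocation (or, as the paper attempts, argue directly that the minimal $\mathfrak{P}$-colouring already has $\chi(G)$ colours and can be taken to be a $\chi^-$-colouring), the forward implication is not proved.
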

\begin{proof}
If  in a $\chi^-$-colouring of $G$, each $v \in V(G)$ yields a rainbow neighbourhood it follows from the second part of Lemma \ref{Lem-2.5} that the corresponding proper colouring can be maximised to obtain a $J$-colouring.

Conversely, assume that a graph $G$ admits a $J$-colouring. Then, it follows from Lemma \ref{Lem-2.5}(i) that the corresponding proper colouring can be minimised to obtain a minimal proper colouring for which each $v\in V(G)$ yields a rainbow neighbourhood. Let the aforesaid set of colours be $\mathcal{C}'$. Assume that a minimum set of colours $\mathcal{C}$ exists which is a $\chi^-$-colouring of $G$ and $|\mathcal{C}|<|\mathcal{C}'|$. It implies that there exists at least one vertex $v \in V(G)$ for which at least one distinct pair of vertices, say $u, w \in N(v)$ exists such that $u$ and $v$ are non-adjacent. Furthermore, $c(u)=c(w)$ under the colouring $\varphi:V(G)\mapsto \mathcal{C}$.

Assume that there is exactly one such $v$ and exactly one such vertex pair $u,w\in N(v)$. But then both $u$ and $w$ yield rainbow neighbourhoods in $G$ under the proper colouring $\varphi:V(G)\mapsto \mathcal{C}$, which is a contradiction to the minimality of $\mathcal{C}'$. By mathematical induction, similar contradictions arise for all vertices similar to $v$. This completes the proof.
\end{proof}

\section{Analysis for Certain Graphs}

Note that we have two types of operations related to graphs, that is: operations on a graph $G$ and operations between two graphs $G$ and $H$. Operations on a graph $G$ result in a well defined derivative of $G$. Examples are the complement graph $G^c$, the line graph $L(G)$, the middle graph $M(G)$, the central graph $C(G)$, the jump graph $J(G)$ and the total graph $T(G)$ and so on. Recall that the jump graph $J(G)$ of a graph $G$ of order $n \geq 3$ is the complement graph of the line graph $L(G)$. Also, note that the line graph is the graphical realisation of edge adjacency in $G$ and the jump graph is the graphical realisation of edge independence in $G$. Some other graph derivative operations are edge deletion, vertex deletion, edge contraction, thorning a graph by pendant vertex addition and so on.

Examples of operations between graphs $G$ and $H$ are, the corona between $G$ and $H$ denoted, $G\circ H$, the join denoted, $G+H$, the disjoint union denoted, $G\cup H$, the cartesian product denoted, $G\square H$ and so on.

\subsection{Operations between certain graphs}

The following result establishes a necessary and sufficient condition for the corona of two graphs $G$ and $H$ to admit a $J$-colouring.

\begin{theorem}\label{Thm-3.1}
If graphs $G$ and $H$ admit $J$-colourings, then $G\circ H$ admits a $J$-colouring if and only if either $G = K_1$ or $\J(G)=\J(H)+1$. 
\end{theorem}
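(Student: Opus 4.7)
The plan is to establish the biconditional by handling the two directions separately. For the sufficient direction, I would give explicit constructions. When $G = K_1$, the corona $G \circ H$ is just the join $K_1 + H$, so I would take any $J$-colouring of $H$ with colours $\{c_1, \ldots, c_{\J(H)}\}$ and assign the single vertex of $K_1$ the new colour $c_{\J(H)+1}$; since this vertex is adjacent to every vertex of $H$, each $u \in V(H)$ gains the extra colour in its closed neighbourhood while the $K_1$-vertex sees all $\J(H)$ colours of $H$. When $\J(G) = \J(H) + 1$, I would fix a $J$-colouring of $G$ with colour set $\{c_1, \ldots, c_{\J(G)}\}$, and for each $v_i \in V(G)$ colour the $i$-th copy $H_i$ with a $J$-colouring of $H$ using the $\J(G) - 1 = \J(H)$ colours of $\{c_1, \ldots, c_{\J(G)}\} \setminus \{c(v_i)\}$ (obtained by relabelling). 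The rainbow condition at every $v_i$ then follows from the $J$-colouring of $G$ (since adjoining $V(H_i)$ only enlarges its palette), while at every $u \in V(H_i)$ it follows from combining the rainbow neighbourhood inherited from $H_i$ with the single colour $c(v_i)$ contributed by $v_i$.

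For the necessary direction, I would suppose $G \neq K_1$ and that $G \circ H$ admits a $J$-colouring $\varphi$ with $k$ colours, aiming to deduce $\J(G) = \J(H) + 1$. The first step is structural: for any $u \in V(H_i)$, since $v_i$ is adjacent to every vertex of $H_i$, the colour $\varphi(v_i)$ cannot appear on $V(H_i)$, and the rainbow condition at $u$ in $G \circ H$ forces $N_{H_i}[u]$ to realise all $k-1$ colours distinct from $\varphi(v_i)$. Hence $\varphi|_{H_i}$ is a $J$-colouring of (the copy of) $H$ with exactly $k-1$ colours, giving $k - 1 \leq \J(H)$; the reverse inequality then comes from the maximality clause in the definition of $J$-colouring together with Lemma~\ref{Lem-2.5}, yielding $k = \J(H) + 1$.

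The remaining task is to show $\J(G) = k$, and this is the step I expect to be the main obstacle. The colouring $\varphi|_G$ is proper but a priori may use fewer than $k$ colours, so it is not automatically a $J$-colouring of $G$ on $k$ colours. I would argue that since each $v_i$ is the unique vertex of its colour in the block $\{v_i\} \cup V(H_i)$, the assignment $i \mapsto \varphi(v_i)$ leaves enough flexibility to extend or re-colour $\varphi|_G$ into a $J$-colouring of $G$ on all $k$ colours. Here I would combine Theorem~\ref{Thm-2.6}, which reduces the existence of a $J$-colouring to verifying the rainbow condition under a $\chi^-$-colouring, with Lemma~\ref{Lem-2.5}(ii), which allows a minimal colouring satisfying a graph-theoretic property to be maximised. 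The crux is to demonstrate that the missing colours across different $v_i$'s can be rearranged, by a swap argument if necessary, so that every $v_i$ has $N_G[v_i]$ covering all $k$ colours; pinning down this flexibility rigorously is where the delicate case analysis will reside.
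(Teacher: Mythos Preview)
Your sufficient direction is essentially the paper's argument: for $G=K_1$ you both add one fresh colour on the hub, and for $\J(G)=\J(H)+1$ you both take a $J$-colouring of $G$ on $\ell$ colours and, for each $v\in V(G)$, relabel the $J$-colouring on the attached copy of $H$ so as to avoid $c(v)$. (The paper phrases this as ``recolour the $c_i$-class of $H$ to $c_{\ell+1}$'', which is the same relabelling up to an apparent typo.)

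For the converse your proposal goes considerably further than the paper, whose entire argument is the single sentence that $c(v)\neq c(u)$ for all $u\in V(H)$ ``therefore either $G=K_1$ or $\J(G)=\J(H)+1$''. You correctly extract from the structure of $G\circ H$ that the restriction of a $J$-colouring to any $H_i$ is a rainbow colouring of $H$ on $k-1$ colours, giving $k-1\le \J(H)$; and the reverse inequality $\J(H)\le k-1$ does follow from maximality, since with $\chi(G)\le k\le \J(H)+1$ one can always assemble a rainbow colouring of $G\circ H$ on $\J(H)+1$ colours. So $k=\J(H)+1$ is fine.

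The step you flag as the main obstacle, namely $\J(G)=k$, is not merely delicate: it is false, and no swap argument or appeal to Lemma~\ref{Lem-2.5} or Theorem~\ref{Thm-2.6} will rescue it. Take $G=C_4$ and $H=K_2$. Then $\J(G)=2$ and $\J(H)=2$, so $G\neq K_1$ and $\J(G)\neq\J(H)+1$. Yet $G\circ H$ admits a $J$-colouring on three colours: colour $C_4$ alternately $c_1,c_2,c_1,c_2$ and colour each pendant $K_2$ with the two colours in $\{c_1,c_2,c_3\}\setminus\{c(v_i)\}$; every closed neighbourhood then meets all three colours, and three is maximal since $\delta(G\circ H)=2$. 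Thus the converse as stated does not hold, and the ``delicate case analysis'' you anticipate cannot be completed. The paper's one-line converse simply glosses over this; your instinct that something substantive is missing at exactly this point is correct.
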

\begin{proof}
\textit{Part 1:} If $G = K_1$ assume $\mathcal{C} = \{c_1,c_2,c_3,\dots,c_{\J(H)}\}$ provides a $J$-colouring of $H$. Colour $K_1$ the colour $c_{\J(H)+1}$. Clearly, $\mathcal{C}'=\mathcal{C}\cup \{c_{\J(H)+1}\}$ is a $J$-colouring of $K_1\circ H$.

\textit{Part 2:} If $G \neq K_1$ and $\J(G) = \J(H) + 1$ let $\mathcal{C} = \{c_1,c_2,c_3,\dots,c_\ell$, $\ell = \J(G)\}$ and $\mathcal{C}' = \{c_1,c_2,c_3,\dots,c_{\ell -1}$, $\ell = \J(G)\}$ provide the $J$-colourings of $G$ and $H$, respectively. Assume that $v \in V(G)$ has $c(v) = c_i$ then colour all $u\in V(H)$ for the copy of $H$ corona'd to $v$ for which $c(u)_{(in~H)} = c_i$, $1 \leq i\leq \ell$, to be $c_{\ell +1}$. Clearly every vertex $v \in V(G)\cup V(H)$ yields a rainbow neighbourhood and $|\mathcal{C}|$ is maximal.

Conversely, let $G\circ H$ admit a $J$-colouring. Then, for any vertex $v\in V(G)$ the subgraph $v\circ H$ holds the condition $c(v)\neq c(u),\ \forall u \in V(H)$. Therefore, either $G=K_1$ or $\J(G)=\J(H)+1$.
\end{proof}

The next corollary requires no proof as it is a direct consequence of Theorem \ref{Thm-3.1}.

\begin{corollary}
If $G\circ H$ admits a $J$-colouring then: $\J(G\circ H) = \J(G)$.
\end{corollary}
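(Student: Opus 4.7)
The plan is to invoke Theorem~\ref{Thm-3.1} directly and simply count the colours used in the constructions inside its proof. Since $G\circ H$ is assumed to admit a $J$-colouring, Theorem~\ref{Thm-3.1} forces us into one of the two cases $G=K_1$ or $\J(G)=\J(H)+1$, and in each case the proof of that theorem already exhibits an explicit $J$-colouring of $G\circ H$, so there is nothing to construct.

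First, I would record the palettes used in the two cases. In the case $G=K_1$, the $J$-colouring is $\mathcal{C}\cup\{c_{\J(H)+1}\}$, of size $\J(H)+1$, which can be rewritten in terms of $\J(G)$ via the identity forced by the corona structure. In the case $\J(G)=\J(H)+1$, the colouring uses the full palette $\{c_1,\dots,c_{\J(G)}\}$ of $G$ augmented by the single auxiliary colour $c_{\J(G)+1}$ used to recolour, in every copy of $H$ attached to a vertex $v\in V(G)$, exactly those $u\in V(H)$ whose original colour clashed with $c(v)$. In either situation the total number of colours is read off directly and expressed purely through $\J(G)$.

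Second, no independent maximality argument is required. By Definition~\ref{Def-1.1} a $J$-colouring is already a maximal proper colouring in which every vertex lies in a rainbow neighbourhood, so the colouring produced by Theorem~\ref{Thm-3.1} is maximum among such colourings of $G\circ H$ by construction. Consequently its colour count is precisely $\J(G\circ H)$, and substituting the relations from the theorem collapses this count to the value asserted in the statement.

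The only mild obstacle is the bookkeeping between the two cases of Theorem~\ref{Thm-3.1}, together with the sanity check that introducing any further colour must violate either properness across some edge $vu$ with $v\in V(G)$ and $u$ in its attached copy of $H$, or the rainbow condition at a vertex of $G$ whose closed neighbourhood already exhausts the palette. Both obstructions are baked into the construction of Theorem~\ref{Thm-3.1}, and therefore the corollary follows with no genuinely new argument.
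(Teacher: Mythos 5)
Your plan is to ``read off'' the colour count from the constructions in the proof of Theorem~\ref{Thm-3.1}, but doing that literally does not give the claimed value, and in one case it contradicts it. In Part~2 of that proof the palette is $\{c_1,\dots,c_\ell\}$ with $\ell=\J(G)$ \emph{plus} the auxiliary colour $c_{\ell+1}$, so the count you read off is $\J(G)+1$, not $\J(G)$; to get a $J$-colouring of $G\circ H$ with exactly $\J(G)$ colours you must recolour the clashing vertices in each attached copy of $H$ with the already-available colour $c_\ell$ (which is unused in $H$), not with a new colour --- indeed with $c_{\ell+1}$ a vertex $v\in V(G)$ having $c(v)=c_\ell$ never sees $c_{\ell+1}$, so that colouring is not even a $J$-colouring on $\ell+1$ colours. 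Worse, in the case $G=K_1$ the construction uses $\J(H)+1$ colours while $\J(K_1)=1$, so the equality $\J(G\circ H)=\J(G)$ simply fails there (take $H=K_1$: $K_1\circ K_1=K_2$ has $\J(K_2)=2\neq 1$); your phrase about rewriting $\J(H)+1$ ``in terms of $\J(G)$ via the identity forced by the corona structure'' is not an argument and cannot repair this. So the corollary only makes sense in the case $\J(G)=\J(H)+1$, and your bookkeeping has to say so.

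The second genuine gap is your claim that ``no independent maximality argument is required.'' Exhibiting one $J$-colouring with $k$ colours only shows $\J(G\circ H)\geq k$, because $\J$ is defined in Definition~\ref{Def-1.1} as the \emph{maximum} number of colours over all $J$-colourings; asserting that the colouring produced in Theorem~\ref{Thm-3.1} is maximum ``by construction'' is circular. What is needed is an upper bound: if $G\circ H$ has a $J$-colouring with $t$ colours, then for a vertex $u$ in a copy of $H$ attached to $v\in V(G)$, the closed neighbourhood $N[u]$ meets every colour class, $v$ supplies exactly one colour, and no vertex of that copy may receive $c(v)$; hence the restriction to the copy of $H$ is a $J$-colouring of $H$ on the remaining $t-1$ colours, giving $t-1\leq\J(H)$ and so $t\leq\J(H)+1=\J(G)$. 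Combined with the (corrected) construction this yields the corollary; without it your argument establishes only a lower bound. For what it is worth, the paper offers no proof at all here (it declares the corollary a direct consequence of Theorem~\ref{Thm-3.1}), so supplying the count \emph{and} this upper bound is exactly the content a complete proof must add.
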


The following theorem discusses the admissibility of $J$-colouring by the join of two graphs.

\begin{theorem}\label{Thm-3.2}
If and only if both graphs $G$ and $H$ admit a $J$-colouring, then $G+H$ admits a $J$-colouring.
\end{theorem}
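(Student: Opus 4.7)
The plan is to attack both directions directly using the structural feature of the join, namely that every vertex of $G$ is adjacent in $G+H$ to every vertex of $H$, which forces the colours on $V(G)$ and $V(H)$ to be disjoint in any proper colouring.

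For the sufficiency direction, assume $G$ and $H$ admit $J$-colourings. I would take a $J$-colouring of $G$ using $\mathcal{C}_G = \{c_1,\ldots,c_{\J(G)}\}$ and a $J$-colouring of $H$ using a completely disjoint palette $\mathcal{C}_H = \{c_1',\ldots,c_{\J(H)}'\}$, and use the union $\mathcal{C}_G \cup \mathcal{C}_H$ on $V(G+H)$. This is clearly proper, because within $G$ and within $H$ the original colourings are proper and the cross-edges join vertices with colours from disjoint palettes. For any $v\in V(G)$, one has $N_{G+H}[v] = N_G[v]\cup V(H)$; the first piece already realises every colour of $\mathcal{C}_G$ (because $v$ is in a rainbow neighbourhood of $G$) and the second piece trivially realises every colour of $\mathcal{C}_H$. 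The symmetric argument handles $u\in V(H)$. Hence every vertex of $G+H$ is in a rainbow neighbourhood under this proper colouring; applying Lemma \ref{Lem-2.5}(ii) (maximising while preserving the rainbow property) promotes it to a $J$-colouring of $G+H$.

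For the necessity direction, suppose $G+H$ admits a $J$-colouring $\varphi$ with palette $\mathcal{C}$. The key initial observation is that, because every vertex of $V(G)$ is adjacent to every vertex of $V(H)$, no colour can appear on both sides. Thus $\mathcal{C}$ splits as $\mathcal{C}=\mathcal{C}_G\sqcup \mathcal{C}_H$ where $\mathcal{C}_G = \varphi(V(G))$ and $\mathcal{C}_H = \varphi(V(H))$. Now restrict $\varphi$ to $V(G)$; this is automatically a proper colouring of $G$ using exactly the palette $\mathcal{C}_G$. Pick any $v\in V(G)$. Since $v$ lies in a rainbow neighbourhood in $G+H$, the set $N_{G+H}[v]=N_G[v]\cup V(H)$ meets every colour class. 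The contribution of $V(H)$ is precisely $\mathcal{C}_H$, so $N_G[v]$ must hit every colour of $\mathcal{C}_G$; i.e.\ $v$ is in a rainbow neighbourhood of $G$ under $\varphi|_{V(G)}$. Thus $\varphi|_{V(G)}$ is a proper colouring of $G$ in which every vertex is in a rainbow neighbourhood, and applying Lemma \ref{Lem-2.5} (minimising and then maximising if needed, and invoking Theorem \ref{Thm-2.6}) we obtain a $J$-colouring of $G$. The identical argument applied to $H$ completes the proof.

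The only delicate step is the bookkeeping around \emph{maximality}: the restriction $\varphi|_{V(G)}$ need not already be a maximal proper colouring satisfying the rainbow property, so one must appeal to Lemma \ref{Lem-2.5}(ii) to lift it to one. Likewise, in the sufficiency direction, the straightforward union of the two $J$-colourings is proper and has the rainbow property, but maximality must again be obtained via Lemma \ref{Lem-2.5}(ii). Aside from this, the argument is structurally forced by the all-to-all cross-edges of the join.
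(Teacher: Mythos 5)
Your proposal is correct, and in the sufficiency direction it is essentially the paper's own argument: the paper also takes $J$-colourings of $G$ and $H$, makes the palettes disjoint (by shifting the colours of $G$ by $\J(H)$ rather than by introducing primed colours), and observes that each vertex of $G$ sees all of $H$'s colours through the cross-edges and all of $G$'s colours through its own rainbow neighbourhood, and symmetrically for $H$; the only difference is that the paper asserts maximality of the combined palette directly from maximality of the two pieces, whereas you route this through Lemma \ref{Lem-2.5}(ii), which is no worse and arguably cleaner bookkeeping. The real divergence is in the converse: the paper dismisses it with one sentence (``the additional edges between $G$ and $H$ form an edge cut''), while you actually carry out the argument that this hint gestures at --- since every $G$-vertex is adjacent to every $H$-vertex, the palette of any $J$-colouring of $G+H$ splits as $\mathcal{C}_G\sqcup\mathcal{C}_H$, the restriction to $V(G)$ is proper, and because $V(H)$ contributes exactly $\mathcal{C}_H$ to every closed neighbourhood, each $v\in V(G)$ must already cover $\mathcal{C}_G$ inside $N_G[v]$; then Lemma \ref{Lem-2.5}/Theorem \ref{Thm-2.6} upgrade this to a $J$-colouring of $G$, and likewise for $H$. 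Your version is therefore more complete than the published proof on the necessity side. The one caveat, which you yourself flag, is that the passage from ``a proper colouring with every vertex in a rainbow neighbourhood'' to ``a \emph{maximal} such colouring exists'' rests entirely on Lemma \ref{Lem-2.5}, whose own proof in the paper is little more than a restatement; within the paper's framework this is acceptable, but a fully self-contained treatment would need a genuine maximisation argument there.
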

\begin{proof}
Assume that both $G$ and $H$ admit a $J$-colouring. Without loss of generality, let $\J(G)\leq \J(H)$. Assume that $\varphi:V(G)\mapsto \mathcal{C}$, $\mathcal{C} = \{c_1,c_2,c_3,\dots,c_\ell\}$ and $\varphi':V(H)\mapsto \mathcal{C}'$, $\mathcal{C}' = \{c_1,c_2,c_3,\dots,c_{\ell'}\}$ is a $J$-colouring of $G$ and $H$, respectively. For each $v \in V(G)$, $c(v) = c_i$ recolour $c(v)\mapsto c_{i+\ell'}$. Denote the new colour set by $\mathcal{C}_{i+\ell'}$. Clearly, each vertex $v \in V(G)$ is adjacent to at least one of each colour in $G+H$ hence, each such vertex yields a rainbow neighbourhood in $G+H$. Similarly, each vertex $u\in V(H)$ is adjacent to at least one of each colour in $G+H$ and hence each such vertex yields a rainbow neighbourhood in $G+H$. Furthermore, since both $|\mathcal{C}|$, $|\mathcal{C}'|$ is maximal colour sets, the set $|\mathcal{C}_{i+\ell'}\cup \mathcal{C}'|$ is maximal. Therefore, $G+H$ admits a $J$-colouring.

The converse follows trivially from the fact that the additional edges between $G$ and $H$ as defined for join form an edge cut in $G+H$.
\end{proof}

The following result discusses the existence of a $J$-colouring for the Cartesian product of two given graphs.

\begin{theorem}\label{Thm-3.3}
If graphs $G$ and $H$ of order $n$ and $m$ respectively admit a $J$-colouring, then 
\begin{enumerate}\itemsep0mm
\item[(i)] $G\Box H$ admits a $J$-colouring.
\item[(ii)] $\J(G\Box H) = \max\{\J(G), \J(H)\}$
\end{enumerate}
\end{theorem}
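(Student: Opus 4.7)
My plan is to prove part (i) by an explicit construction, which simultaneously gives the lower bound in (ii), and then to establish the matching upper bound by a separate argument. Write $k_1 = \J(G) \le \J(H) = k_2$ without loss of generality and fix $J$-colourings $\varphi_G : V(G) \to \{1,\ldots,k_1\}$ and $\varphi_H : V(H) \to \{1,\ldots,k_2\}$. I would define
$$\Phi((u,v)) = 1 + \big((\varphi_G(u) + \varphi_H(v) - 2)\bmod k_2\big)$$
and first verify that $\Phi$ is proper: two adjacent vertices of $G\Box H$ differ in exactly one coordinate, where the corresponding $\varphi_G$- or $\varphi_H$-values differ by a nonzero integer of absolute value at most $k_2 - 1$ (using $k_1 \le k_2$), hence are distinct modulo $k_2$. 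For the rainbow property at an arbitrary $(u_0,v_0)$, observe that the restriction of $\Phi$ to the $H$-fibre slice $\{u_0\}\times N_H[v_0]$ of $N[(u_0,v_0)]$ is a translate by $\varphi_G(u_0)-1$ of $\varphi_H|_{N_H[v_0]}$; since $\varphi_H$ is itself a $J$-colouring, this slice already hits every colour in $\{1,\ldots,k_2\}$, so $(u_0,v_0)$ lies in a rainbow neighbourhood. Lemma \ref{Lem-2.5}(ii) then upgrades $\Phi$ to a maximal $J$-colouring if it is not already maximal, establishing (i) and the lower bound $\J(G\Box H)\ge k_2$.

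For the reverse inequality in (ii) I would argue by contradiction. Suppose some $J$-colouring $\Psi$ of $G\Box H$ uses $\ell > k_2$ colours. Pick $v_0 \in V(H)$ with $d_H(v_0) = \delta(H)$ and $u_0 \in V(G)$ with $d_G(u_0) = \delta(G)$. Properness forces distinct colours on the $H$-fibre slice $\{u_0\}\times N_H[v_0]$ and on the $G$-fibre slice $N_G[u_0]\times\{v_0\}$, and these slices meet only at $(u_0,v_0)$, so the rainbow condition at $(u_0,v_0)$ requires at least $\ell - (1+\delta(H))$ colours to be supplied by the $G$-side, and symmetrically at least $\ell - (1+\delta(G))$ colours by the $H$-side. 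Combined with the bounds $\J(G)\le \delta(G)+1$ and $\J(H)\le \delta(H)+1$ from \cite{NKS}, this would pin down how colours are distributed on the fibres $G_{v_0}$ and $H_{u_0}$. Propagating these forced patterns along the fibres and exploiting that every other vertex also lies in a rainbow neighbourhood should yield a violation of properness, contradicting the existence of $\Psi$.

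The principal obstacle is this upper bound step: the trivial estimate $\ell \le 1 + \delta(G) + \delta(H)$ is too weak in general, so converting the local cross-shaped rainbow constraints at low-degree vertices into the global bound $\ell \le \max\{\J(G),\J(H)\}$ is where the real work lies. If the direct propagation argument does not close cleanly, I would invoke Theorem \ref{Thm-2.6} and try to show that any $\chi^-$-colouring witnessing a $J$-colouring of $G\Box H$ must coincide, up to relabelling of colours, with a shift construction of the form used in part (i), thereby forcing $\ell \le k_2$ and completing the proof.
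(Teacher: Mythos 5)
Your part (i) is correct and in fact cleaner than the paper's own argument: the modular-shift colouring $\Phi((u,v))=1+\big((\varphi_G(u)+\varphi_H(v)-2)\bmod k_2\big)$ is proper because adjacent vertices of $G\Box H$ differ in exactly one coordinate and the corresponding colour values differ by a nonzero integer of modulus at most $k_2-1$, and the rainbow property at $(u_0,v_0)$ holds because the slice $\{u_0\}\times N_H[v_0]$ carries a cyclic shift (mod $k_2$) of $\varphi_H|_{N_H[v_0]}$, which already meets every colour class since $\varphi_H$ is a $J$-colouring. The paper instead colours the copies of $H$ and the interconnecting copies of $G$ ``identically'' to the given $J$-colourings and iterates; your explicit formula reaches the same conclusions (admission of a $J$-colouring, and $\J(G\Box H)\ge\max\{\J(G),\J(H)\}$ after invoking Lemma \ref{Lem-2.5}(ii)) without the ambiguity of whether those two families of fibre colourings can coexist, so this half of your proposal is a genuinely different and more rigorous route.

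The genuine gap is the upper bound $\J(G\Box H)\le\max\{\J(G),\J(H)\}$ in (ii), which you acknowledge but never close. The local count at a vertex $(u_0,v_0)$ of minimum fibre degrees only gives $\ell\le 1+\delta(G)+\delta(H)$, and nothing in your sketch converts the cross-shaped rainbow constraint at low-degree vertices into the stronger bound: the ``propagation along fibres'' step is not specified, and the fallback claim that any $\chi^-$-colouring witnessing a $J$-colouring of $G\Box H$ must coincide, up to relabelling, with a shift construction is itself a substantial unproved assertion (there is no a priori reason that a $J$-colouring of a product restricts on the fibres $G_{v_0}$ and $H_{u_0}$ to colourings related by a shift). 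So as written you have proved (i) and only one inequality of (ii). For what it is worth, the paper's own treatment of the missing inequality is no better -- it merely asserts the claim ``by similar reasoning'' to $\chi(G\Box H)=\max\{\chi(G),\chi(H)\}$ -- but that does not repair your proposal: the contradiction argument must actually be carried out (or the equality re-examined) before part (ii) can be regarded as proved.
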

\begin{proof}
\begin{enumerate}\itemsep0mm
\item[(i)] Without loss of generality assume $\J(H) \geq \J(G)$. Also, assume that $V(G) = \{v_i: 1\leq i \leq n\}$ and $V(H) = \{u_i:1\leq i \leq m\}$. From the definition of $G\Box H$ it follows that $V(G\Box H) = \{(v_i,u_j): 1\leq i \leq n, 1\leq j \leq m\}$. For $i = 1$, if $u_j\sim u_k$ in $H$, where $\sim$ denotes the adjacency, then $(v_1,u_j)\sim (v_1,u_k)$ and hence we obtain an isomorphic copy of $H$. Such a copy admits a $J$-colouring identical to that of $H$ in respect of the vertex elements $u_1,u_2,u_3,\ldots,u_m$. Now obtain the disjoint union with the copies of $H$ corresponding to $i=2, 3,4,\ldots, n$. Apply the definition of $G\Box H$ for $u_1$ and if $v_i\sim v_j$ in $G$, then $(v_i,u_1)\sim (v_j,u_1)$. An interconnecting copy of $G$ is obtained which result in the first iteration connected graph. Similarly, this copy of $G$ admits a $J$-colouring identical to that of $G$ in respect of the vertex elements $v_1,v_2,v_3,\dots,v_n$. Proceeding iteratively to add all copies of $G$ for $i= 2,3,4,\ldots,n$ in terms of the definition of $G\Box H$, clearly shows that a $J$-colouring is admitted.
\item[(ii)] The second part of the result follows from the similar reasoning used to prove and hence, $\chi(G\Box H)=\max\{\chi(G),\chi(H)\}$.
\end{enumerate}
\end{proof}

\subsection{Operations on certain graphs}

Recall that for any connected graph $G$, $\J(G)\leq \delta(G)+1$ (see \cite{NKS}) and for $n \geq 2,\ \J(P_n) =2$ and $\J^*(P_n)=3$. In view of these results, we have the following results in respect of certain operations on paths and cycles.

\begin{proposition}\label{Prop-3.7}
For a path $P_n$, $n\geq 2$ with edge set consecutively labeled as $e_1,e_2,e_3,\dots,e_{n-1}$ and the corresponding line graph vertices consecutively labeled as $u_1,u_2,u_3,\dots,u_{n-1}$. We have
\begin{enumerate}\itemsep0mm
\item[(i)] $\J(L(P_n))=2$ and $\J^*(L(P_n))=3$.
\item[(ii)] $\J(M(P_2))=2$ and $M(P_n)\ n\geq 3$ does not admit a $J$-colouring and $\J^*(M(P_n))=3$.
\item[(iii)] $\J(T(P_n))=\J^*(T(P_n))=3$.
\item[(iv)] For connectivity, let $n\geq 5$. Then $\J(J(P_5))=3$ and $\J^*(J(P_5))=3$ and for $n \geq 6$, $$\J(J(P_n)) = \J^*(J(P_n)) =
\begin{cases}
\frac{n}{2} & \quad \text{$n$ is even}\\
\lfloor \frac{n}{2}\rfloor & \quad \text{$n$ is odd}.
\end{cases}$$
\item[(v)] $\J(C(P_n)) = \J^*(C(P_n))=3$.
\end{enumerate}
\end{proposition}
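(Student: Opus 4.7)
For each of the five parts I would follow a common two-step scheme: (1) apply the inequality $\J(G)\leq\delta(G)+1$ from \cite{NKS}, quoted just before the proposition, to a minimum-degree vertex of the derived graph to obtain the upper bound; (2) exhibit an explicit maximal proper colouring realising the claimed value, using the three-periodic pattern from the proof of Theorem~\ref{Thm-2.1} as the building block. Verification of the rainbow-neighbourhood condition always reduces to examining closed neighbourhoods of a few representative vertex types.

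Part (i) is immediate from $L(P_n)\cong P_{n-1}$ together with the quoted values $\J(P_m)=2$ and $\J^{*}(P_m)=3$. For part (ii), write $V(M(P_n))=\{v_i\}\cup\{e_i\}$ with $v_i\sim e_{i-1},e_i$ and $e_i\sim e_{i+1}$; the triangles $\{v_i,e_{i-1},e_i\}$ give $\chi\geq 3$, while $v_1,v_n$ are pendant with $|N[v_1]|=|N[v_n]|=2$. For $n\geq 3$ the pendants block any $J$-colouring, and for $P_2$ the middle graph is simply $P_3$. Ignoring the pendants, every interior $v_i$ has $|N[v_i]|=3$, so $\J^{*}\leq 3$, and the cyclic colouring $c(e_i)\equiv i\pmod{3}$ completed by choosing $c(v_i)$ to be the unique colour missing from $\{c(e_{i-1}),c(e_i)\}$ realises $\J^{*}=3$. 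Part (iii) is analogous: the triangles $\{v_i,v_{i+1},e_i\}$ in $T(P_n)$ force $\chi\geq 3$, the endpoints $v_1,v_n$ have degree~$2$ so $\J\leq 3$, and the colouring $v_i\mapsto c_{1+(i-1)\bmod 3}$ with $e_i$ taking the remaining colour is a $J$-colouring on three colours.

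For part (iv), the reduction $J(P_n)=\overline{L(P_n)}=\overline{P_{n-1}}$ turns a proper colouring into a partition of $V(P_{n-1})$ into cliques, which, in the triangle-free path $P_{n-1}$, means edges and singletons. A $J$-colouring cannot tolerate singleton classes at interior positions, since any such singleton would be missed by its path-neighbour; arranging an optimal edge partition (perfect when $n-1$ is even, with a single endpoint singleton otherwise) gives the claimed value, and the rainbow check is that no closed neighbourhood in $\overline{P_{n-1}}$ can simultaneously miss both endpoints of a matched pair, because that would require two consecutive path-neighbours of the same vertex. The small case $n=5$ is dispatched by direct inspection of $\overline{P_4}$. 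Part (v) closes analogously: the subdividing vertices $e_i$ of $C(P_n)$ satisfy $N[e_i]=\{e_i,v_i,v_{i+1}\}$, forcing $\J\leq 3$, while a three-colouring of the $v_i$ respecting the non-adjacency edges introduced by the central-graph construction, extended to each $e_i$ by the third colour, supplies the matching lower bound.

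The main obstacle is part (iv): the crude bound $\delta(J(P_n))+1$ is much larger than the claimed value, so the sharp upper bound must come from the structural observation that singleton colour classes are essentially forbidden in a $J$-colouring of $\overline{P_{n-1}}$; this reduces the problem to counting matchings in $P_{n-1}$ and treating the parities of $n$ separately, with the boundary case $n=5$ handled by hand.
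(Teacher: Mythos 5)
Your treatment of (i)--(iii) is correct and essentially the paper's route (upper bounds from degree-$2$ internal vertices, lower bounds from explicit $3$-periodic colourings of the triangle chains in $M(P_n)$ and $T(P_n)$); your colourings there are in fact more explicit than what the paper records. The genuine gaps are in (iv) and (v). In (iv), your reformulation is the right lemma: colour classes of $J(P_n)=\overline{P_{n-1}}$ are cliques of $P_{n-1}$, hence edges or singletons. But it contradicts your own construction for even $n$: an \emph{endpoint} singleton $\{u_1\}$ is just as inadmissible as an interior one, because the non-neighbours of $u_2$ in $\overline{P_{n-1}}$ are exactly $u_1$ and $u_3$, so $N[u_2]$ misses the class $\{u_1\}$ --- an endpoint still has a path-neighbour. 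Since $n-1$ is odd when $n$ is even, every partition into cliques of size at most two contains a singleton, so by your own argument the ``edge partition plus one endpoint singleton'' arrangement cannot satisfy the rainbow condition, and the even case of your proof cannot be completed. This is not a repairable slip on your side: the paper's own colouring $c_1,c_1,c_2,c_2,\dots,c_{n/2}$ fails at $u_{n-2}$, which sees neither $u_{n-3}$ nor $u_{n-1}$ and hence misses the singleton class $\{u_{n-1}\}$; your analysis actually exposes that the stated value for even $n$ is untenable. Also, ``direct inspection'' of $\overline{P_4}\cong P_4$ gives $\J=2$, not $3$, for the $n=5$ claim.

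In (v), the lower-bound step fails. In $C(P_n)$ the original vertices induce $\overline{P_n}$, so properness forces $c(v_i)\neq c(v_j)$ whenever $|i-j|\ge 2$; and for a subdivision vertex $e_i$ (your upper-bound observation $N[e_i]=\{e_i,v_i,v_{i+1}\}$ is correct) to lie in a rainbow neighbourhood under a $3$-colouring you also need $c(v_i)\neq c(v_{i+1})$. Together these force $v_1,\dots,v_n$ to receive pairwise distinct colours, impossible with three colours once $n\ge 4$. In particular the promised ``three-colouring of the $v_i$ respecting the non-adjacency edges'' does not even exist for $n\ge 7$, where $\chi(\overline{P_n})=\lceil n/2\rceil>3$, and for $4\le n\le 6$ it cannot meet the rainbow requirement at the vertices $e_i$. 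So this part of your proposal cannot be carried out as described; note that the paper's one-line justification (no pendant vertices, contains a $C_5$) does not address this obstruction either.
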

\begin{proof}
\begin{enumerate}\itemsep0mm
\item[(i)] Since $L(P_n)=P_{n-1}$, the result follows from the result that for any connected graph $G$, $\J(G)\leq \delta(G)+1$.

\item[(ii)] Since $M(P_2) = P_3$ the result follows from the result that for any connected graph $G$, $\J(G)\leq \delta(G)+1$. For $n\geq 3$, the middle graph contains a triangle hence, $\J(M(P_n)) \geq \chi(M(P_n)) \geq 3$. Also $M(P_n)$ has two pendant vertices therefore $r_\chi(M(P_n)) \neq n$. So $M(P_n)$, $n \geq 3$ does not admit a $J$-colouring. The derivative graph $G'=M(P_n)-\{v_1,v_n\}$ contains a triangle and $\delta(G')=2$. Therefore, $\J^*(M(P_n))=3$.

\item[(iii)] Since $\J(T(P_n)) \leq \delta(\J(T(P_n)) +1=3$ and $T(P_n)$ contains a triangle, $\J(T(P_n))=3$. As $T(P_n)$ has no pendant vertex and contains an odd cycle $C_3$, the result $\J^*(T(P_n))=3$ is immediate.

\item[(iv)] For $P_5$ we have $J(P_5)=P_4$. Hence, the result follows from for any connected graph $G$, $\J(G)\leq \delta(G)+1$. For a path $P_n,\ n\geq 6$ and edge set consecutively labeled as $e_1,e_2,e_3,\dots,e_{n-1}$ and the corresponding line graph vertices consecutively labeled as $u_1,u_2,u_3,\dots,u_{n-1}$, we have the consecutive vertex $\chi^-$-colouring sequence of $J(P_n)$ is given by $c_1,c_1,c_2,c_2,c_3,c_3,\ldots,c_{\frac{n}{2}}$ if $n$ is even and $c_1,c_1,c_2,c_2,c_3,c_3,\ldots,c_{\lfloor \frac{n}{2}\rfloor},c_{\lfloor \frac{n}{2}\rfloor}$ if $n$ is odd. Since the vertices $u_i$, $u_{i+1}$, $1\leq i \leq n-2$ are pairwise not adjacent, the $\chi^-$-colouring is maximal as well. Clearly, every vertex $u_i$ yields a rainbow neighbourhood. Therefore, the result follows.

\item[(v)] Since $C(P_n)$ has no pendant vertex and contains an odd cycle $C_5$, the result is immediate.
\end{enumerate}
\end{proof}

Next, we consider cycles $C_n,\ n\geq 3$. In \cite{NKS}, it is proved that 

\begin{theorem}\label{Thm-3.8}\cite{NKS}
If $C_n$ admits a $J$-colouring then: 
\begin{equation*}
\J(C_n)=
\begin{cases}
3 & \text{if}\ n\equiv 0\,({\rm mod}\ 3)\\
2 & \text{if}\ n\equiv 0\,({\rm mod}\ 2) \text{and}\ n\not\equiv 0\,({\rm mod}\ 3).
\end{cases}
\end{equation*}
\end{theorem}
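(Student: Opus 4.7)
The plan is to exploit two hard constraints: first, the bound $\J(C_n)\le \delta(C_n)+1=3$ from the already cited result of \cite{NKS}, and second, the observation that $\chi(C_n)\ge 2$ always, so any $J$-colouring of $C_n$ uses exactly $2$ or $3$ colours. I would split the argument into these two cases and in each case extract a parity/periodicity condition on $n$.

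For the case of three colours, I would argue that since every vertex $v\in V(C_n)$ has $|N[v]|=3$ and a $J$-colouring demands that all three colour classes meet $N[v]$, the three vertices in $N[v]$ must receive pairwise distinct colours. Writing the cycle as $v_1 v_2 \cdots v_n v_1$, this local condition forces $c(v_{i+1})$ to differ from both $c(v_i)$ and $c(v_{i-1})$, which (by a straightforward induction on $i$) pins the colouring down to the periodic pattern $c_1,c_2,c_3,c_1,c_2,c_3,\ldots$ up to a cyclic relabelling of colours. Closing the cycle consistently then requires $n\equiv 0\pmod 3$. Conversely, when $3\mid n$ this very pattern is a proper colouring in which every vertex yields a rainbow neighbourhood, and maximality follows from the upper bound $\J(C_n)\le 3$; hence $\J(C_n)=3$.

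For the case of two colours, a $J$-colouring is just a proper $2$-colouring in which each $N[v]$ meets both colour classes; the second condition is automatic once the first holds, since the two neighbours of $v$ must then be coloured opposite to $v$. Such a $2$-colouring exists if and only if $n$ is even. Combining with the previous paragraph: when $n$ is even but $n\not\equiv 0\pmod 3$, the three-colour case is ruled out, so the only $J$-colouring uses two colours and $\J(C_n)=2$; when $n\equiv 0\pmod 6$ both colourings exist but the three-colour one wins by maximality, giving $\J(C_n)=3$ consistently with the stated formula.

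The main obstacle I anticipate is making the forcing argument in the three-colour case airtight, namely showing that the local triple-distinctness condition really does force a strictly periodic pattern around the whole cycle rather than allowing some hybrid colouring. I would handle this by fixing $c(v_1)$ and $c(v_2)$ (which must differ), observing that $c(v_3)$ is then forced to be the unique remaining colour so that $N[v_2]$ is rainbow, and iterating; the closure condition on $v_n, v_1, v_2$ then yields the divisibility requirement. The remaining cases (odd $n$ with $n\not\equiv 0\pmod 3$) fall outside the hypothesis of the theorem, so nothing further needs to be shown, although it is worth remarking in passing that the argument above in fact proves $C_n$ admits no $J$-colouring in that regime.
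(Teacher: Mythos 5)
Your argument is correct. Note, however, that the paper does not prove this statement at all: Theorem~\ref{Thm-3.8} is quoted verbatim from the reference \cite{NKS} (``A note on Johan colouring of graphs''), so there is no internal proof to compare yours against. Your proposal is a legitimate self-contained derivation: the bound $\J(C_n)\le\delta(C_n)+1=3$ (or, even more directly, the fact that $|N[v]|=3$ caps the number of colour classes that can all meet $N[v]$) restricts any $J$-colouring to $2$ or $3$ colours; in the three-colour case the rainbow condition at each vertex forces the three vertices of every closed neighbourhood to be pairwise differently coloured, which pins the colouring to the period-three pattern and hence forces $3\mid n$, with the converse supplied by exhibiting that pattern; in the two-colour case properness alone already makes every closed neighbourhood rainbow, and such a colouring exists exactly when $n$ is even. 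You also correctly dispose of the overlap case $n\equiv 0\,({\rm mod}\ 6)$ by maximality, and your closing remark that odd $n$ with $n\not\equiv 0\,({\rm mod}\ 3)$ admits no $J$-colouring is a genuine (small) addition, since it shows the hypothesis of the theorem is exactly the complement of that regime. The only cosmetic caution is to state explicitly, when invoking ``maximality'' for the two-colour case, that $\J(C_n)$ is the maximum number of colours over all proper colourings in which every vertex yields a rainbow neighbourhood, so ruling out three colours immediately gives $\J(C_n)=2$; as written this is implicit but sound.
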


Analogous to the proof of Theorem 2.7 in [8], we now establish the corresponding results for the derivatives of cycle graphs in the following proposition.

\begin{proposition}\label{Prop-3.9}
For a cycle $C_n, n\geq 3$ and edge set consecutively labeled as $e_1,e_2,e_3,\dots,e_n$ and the corresponding line graph vertices consecutively labeled as $u_1,u_2,u_3,\dots,u_n$, we have
\begin{enumerate}\itemsep0mm 
\item[(i)] $\J(L(C_n)) = \J^*(L(C_n))= 2$ if and only if $n\equiv 0\,({\rm mod}\ 2)$ and $n\not\equiv 0\,({\rm mod}\ 3)$, and $\J(L(C_n))=\J^*(L(C_n))=3$ if and only if $n\equiv 0\,({\rm mod}\ 3)$, else, $L(C_n)$ does not admit a $J$-colouring.
\item[(ii)] For $n \geq 3$, $\J(M(C_n))=\J^*(M(C_n)) = 3$ if $n\equiv 0\,({\rm mod}\ 3)$, or if, $M(C_n)$ for $n\not \equiv 0\,({\rm mod}\ 3)$, and without loss of generality admits the colouring: $c(v_1)=c_1$, $c(u_1)=c_2$, $c(v_2) = c_3$, $c(u_2) = c_1$, $c(v_3) = c_2$, $c(u_3) = c_3,\ldots, c(v_{n-1}) = c_1, c(u_{n-1})= c_2, c(v_n)=c_1, c(u_n) = c_3$, else, $M(C_n)$ does not admit a $J$-colouring.
\item[(iii)]  $\J(T(C_n)) = \J^*(T(C_n)) = 4$  if and only if $n$ is even, else, $T(C_n)$ does not admit a $J$-colouring.
\item[(iv)] For $n \geq 6$, $\J(J(C_n)) = \J^*(J(C_n)) = 
\begin{cases}
\frac{n}{2} & \quad \text{$n$ is even}\\
\lfloor \frac{n}{2}\rfloor & \quad \text{$n$ is odd}.
\end{cases}$.
\item[(v)]  $\J(C(C_n)) = \J^*(C(C_n)) = 3$.
\end{enumerate}
\end{proposition}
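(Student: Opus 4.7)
The plan is to handle the five parts separately, using throughout the standing bound $\J(G)\le \delta(G)+1$ from \cite{NKS} and the fact that each derivative graph in question is pendant-free for $n\ge 3$, so $\J=\J^*$ whenever a $J$-colouring exists. Part (i) is immediate because $L(C_n)\cong C_n$, and Theorem~\ref{Thm-3.8} already gives the claimed trichotomy; the non-existence case covers odd $n$ with $n\not\equiv 0\,({\rm mod}\,3)$, inherited from that theorem.

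For part (ii), first compute $\delta(M(C_n))=2$ (each original $v_i$ has only the two neighbours $u_{i-1},u_i$), whence $\J(M(C_n))\le 3$, and note that the triangle $\{v_i,u_{i-1},u_i\}$ forces $\chi\ge 3$. A $J$-colouring must therefore rainbow-colour every such triangle. When $n\equiv 0\,({\rm mod}\,3)$ the periodic assignment $c(v_i),c(u_i)$ taken cyclically with period 3 closes up consistently. For $n\not\equiv 0\,({\rm mod}\,3)$, I would verify directly that the explicit ad-hoc sequence exhibited in the statement is proper and produces a rainbow neighbourhood at every degree-$2$ vertex $v_i$ and at every degree-$4$ vertex $u_i$; in the remaining residue classes, propagating the rigid triangle constraint around the cycle yields a colour clash. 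Part (iii) similarly uses $\delta(T(C_n))=4$ together with the triangles $\{v_i,v_{i+1},u_i\}$ giving $\chi\ge 3$; for $n$ even one exhibits an alternating $4$-colouring and checks rainbow at every closed neighbourhood, while for $n$ odd one shows by a propagation-around-the-cycle argument that simultaneously rainbow-colouring the overlapping triangles forces a contradiction on closing the cycle.

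Part (iv) mirrors Proposition~\ref{Prop-3.7}(iv): since $L(C_n)\cong C_n$, we have $J(C_n)=\overline{C_n}$, whose independent sets are exactly cliques of $C_n$, i.e.\ edges or isolated vertices. A proper colouring of $\overline{C_n}$ therefore corresponds to a matching-plus-vertex partition of $V(C_n)$, which produces the $\chi^-$-sequence $c_1,c_1,c_2,c_2,\dots$ exactly as in the path case and delivers the stated count. Because each $u_i$ has only $u_{i-1}$ and $u_{i+1}$ as non-neighbours in $\overline{C_n}$, every colour class other than $c(u_i)$ already meets $N(u_i)$, so the colouring is simultaneously maximal and $J$-valid. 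Part (v) uses $\delta(C(C_n))=2$ at each subdivision vertex, so $\J\le 3$, together with the presence of an odd cycle in $C(C_n)$ (through an original $v_i$, its subdivision neighbours, and one of the chords between non-adjacent originals) to give $\chi\ge 3$; an explicit 3-colouring cycling through the subdivision vertices and respecting the chord structure then completes the argument with a per-vertex rainbow check.

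The main obstacle I expect lies in the non-existence directions of parts (ii) and (iii): the positive cases all reduce to exhibiting and verifying a concrete colouring, but turning a local triangle constraint into a global obstruction around a cycle of the wrong residue class requires a careful propagation/parity argument to show that no permutation of the three (respectively four) colours can simultaneously rainbow every overlapping triangle once the cycle closes up.
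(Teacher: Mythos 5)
Your positive-direction arguments track the paper's proof closely: the same $\delta+1$ bound at the low-degree vertices, the identification $L(C_n)\cong C_n$ with Theorem~\ref{Thm-3.8}, explicit periodic colourings for $M(C_n)$ and $T(C_n)$, the paired $\chi^-$-sequence $c_1,c_1,c_2,c_2,\dots$ for the jump graph, and the pendant-free observation giving $\J=\J^*$. However, there are two genuine gaps. First, in part (iii) you never establish maximality: your upper bound is $\J(T(C_n))\le\delta(T(C_n))+1=5$, while your construction only exhibits a $4$-colouring, so the claim $\J(T(C_n))=4$ (rather than possibly $5$) is not proved. The paper closes this gap (however tersely) by arguing that no closed neighbourhood of $T(C_n)$ induces $K_5$, so a $5$-colour $J$-colouring is impossible; some argument of this kind is indispensable and is missing from your plan.

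Second, the non-existence directions of (ii) and (iii) are only promised, not proved: ``propagating the rigid triangle constraint around the cycle yields a colour clash'' is exactly the step you flag as the main obstacle, and as stated it is not even the right constraint for (iii) — with $4$ colours a rainbow closed neighbourhood of a degree-$4$ vertex involves five vertices, not a single triangle, so rainbow-colouring the triangles $\{v_i,v_{i+1},u_i\}$ is neither necessary in that form nor obviously obstructed by parity alone. The paper instead disposes of these cases by appealing to chromatic-number computations ($\chi(M(C_n))$ and $\chi(T(C_n))$ in the bad residue classes exceed the $\delta+1$ ceiling, or the vertices fail the rainbow condition under any $\chi^-$-colouring, invoking Theorem~\ref{Thm-2.6}); whichever route you choose, the obstruction must actually be carried out, since it is the content of the ``only if'' halves of (ii) and (iii). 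The remaining parts, (i), (iv) and (v), are in substance the paper's own argument and are fine modulo the same level of verification the paper itself supplies.
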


\begin{proof}
(i) Because $L(C_n) = C_n$ the result follows from Corollary 3.6. Also because $L(C_n)$ has no pendant edges,  $\J(L(C_n)) = \J^*(L(C_n))$.

(ii) If $M(C_n)$ admits a $J$-colouring then $\J(M(C_n)) \leq \delta(\J(M(C_n)) +1 =3$. For $n\equiv 0\,({\rm mod}\ 3)$, consider the colouring: $c(v_1)=c_1$, $c(u_1)=c_2$, $c(v_2)=c_3$, $c(u_2)=c_1$, $c(v_3)=c_2$, $c(u_3)=c_3,\ldots, c(u_{n-1})= c_1, c(v_n)=c_2, c(u_n) = c_3$.

From the definition of the middle graph, we know that $M(C_n)$ has $n$ triangles stringed so clearly the proper colouring is maximum and all vertices yield a rainbow neighbourhood. Part 2 follows by similar reasoning and hence the result follows. Also, since $M(C_n)$ has no pendant edges,  $\J(M(C_n)) = \J^*(M(C_n))$. In all other cases, $\chi((M(C_n))=4$ and a $J$-colouring does not exist.

(iii) Note that $\J(T(C_n))\leq \delta(\J(T(C_n))+1=5$. Since $T(C_n)$ contains a triangle, $\J(T(C_n)) \geq 3$. Furthermore, $\chi((T(C_n)) = 4$ if and only if $n\equiv 0\,({\rm mod}\ 2)$ and $n\not\equiv 0\,({\rm mod}\ 3)$, and all vertices yield a rainbow neighbourhood. Also, for any set of vertices $V'=\{v_i,v_{i+1},v_{i+2},v_{i+2},v_{i+3},v_{i+4}\}\mapsto  \{v_iv_j: 1 \leq i \leq n,~ 0 \leq j \leq 4,~and~ (i+j)\mapsto (i+j)~(mod~6)\}$, the induced subgraph $\langle V'\rangle \neq K_5$. Therefore, $\J(T(C_n)) = 4$. Also because $T(C_n)$ has no pendant edges,  $\J(T(C_n)) = \J^*(T(C_n))$. Otherwise, $\chi((T(C_n)) = 5$, and not all vertices yield a rainbow neighbourhood and hence a $J$-colouring is not obtained.

(iv) For $n = 5$, $J(C_5) = C_5$ and thus, does not admit a $J$-colouring. For a path $C_n$, $n\geq 6$ and edge set consecutively labeled as $e_1,e_2,e_3,\dots,e_{n-1}$ and the corresponding line graph vertices consecutively labeled as $u_1,u_2,u_3,\dots,u_{n-1}$, we have the consecutive vertex $\chi^-$-colouring sequence of $J(C_n)$ is given by  $c_1,c_1,c_2,c_2,c_3,c_3,\ldots,c_{\frac{n}{2}}$ if $n$ is even and $c_1,c_1,c_2,c_2,c_3,c_3,\ldots,c_{\lfloor \frac{n}{2}\rfloor},c_{\lfloor \frac{n}{2}\rfloor}$ if $n$ is odd ($n-1$ entries). As the vertices $u_i$, $u_{i+1}$, $1\leq i \leq n-2$ are pairwise not adjacent, the $\chi^-$-colouring is maximal as well. Clearly, every vertex $u_i$ yields a rainbow neighbourhood. Therefore, the result follows.

(v) The result is trivial for $C(C_3)$. For $n \geq 4$, $\J(C(C_n)) \leq \delta(\J(C(C_n))+1=3$. Since $\chi((C(C_n)) = 3$ and all vertices yield a rainbow neighbourhood and $C(C_n)$ contains a cycle $C_5$, the result $\J(C(C_n))=3$ holds immediately. Also, since $C(C_n)$ has no pendant edges, $\J(C(C_n)) = \J^*(C(C_n))$.
\end{proof}

\section{Extremal Results for Certain Graphs}

For a graph $G$ of order $n\geq 1$, which admits a $J$-colouring the  minimum (or maximum) number of edges in a subset $E'_k\subseteq E(G)$ whose removal ensures that $\J(G-E'_k)=k,\ 1\leq k \leq \J(G)$, is discussed in this section. These extremal variables are called the minimum (or maximum) rainbow bonding variables and are denoted $r^-_k(G)$ and $r^+_k(G)$, respectively. A graph $G$ which does not admit a $J$-colouring has $r^-_k(G)$ and $r^+_k(G)$ undefined. For such aforesaid graph it is always possible to remove a minimal set of edges, $E''$, which is not necessarily unique such that $G- E''$ admits a $J$-colouring. This is formalised in the next result.

\begin{lemma}\label{Lem-4.1}
For any connected graph $G$ which does not admit a $J$-colouring, a minimal set of edges, $E''$ which is not necessarily unique, can be removed such that $G- E''$ admits a $J$-colouring.
\end{lemma}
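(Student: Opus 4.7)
The plan is to reduce the lemma to a straightforward finiteness argument on the poset of edge subsets. Let $\mathcal{F} := \{E' \subseteq E(G) : G - E' \text{ admits a } J\text{-colouring}\}$, partially ordered by inclusion. The task splits into showing (a) $\mathcal{F}\neq \emptyset$, and (b) that $\mathcal{F}$ possesses an inclusion-minimal element.

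First I would verify $\mathcal{F}\neq \emptyset$ by exhibiting the extreme choice $E' = E(G)$. Removing every edge of $G$ yields the null graph $\mathfrak{N}_n$ on $n = |V(G)|$ vertices, and by the convention established at the start of Section~2 we have $\J(\mathfrak{N}_n) = 1$, so $\mathfrak{N}_n$ admits a $J$-colouring. Hence $E(G) \in \mathcal{F}$.

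Second, since $E(G)$ is finite, every nonempty subfamily of $2^{E(G)}$ has an inclusion-minimal element. Concretely, starting from $E_0 := E(G) \in \mathcal{F}$, I would recursively define $E_{i+1}$ to be any proper subset of $E_i$ that still belongs to $\mathcal{F}$; if no such subset exists, terminate and set $E'' := E_i$. The sequence $|E_0| > |E_1| > \cdots$ is strictly decreasing in a finite set, so termination occurs after at most $|E(G)|$ steps, and the terminal $E''$ is minimal in $\mathcal{F}$ by construction.

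Finally, the non-uniqueness clause is merely a remark rather than a claim requiring proof: symmetric graphs furnish obvious examples where several distinct edge subsets produce equally minimal reductions (for instance, in a graph with a nontrivial automorphism group, the image of any minimal $E''$ under an automorphism is again minimal, and generically distinct). I would include a one-line mention of this symmetry argument to justify the phrase ``not necessarily unique''. The proof has no real technical obstacle; the only subtlety is to observe that the lemma does not require $G - E''$ to remain connected, so the trivial witness $E(G)$ is admissible and the argument is essentially immediate from finiteness and the null-graph convention.
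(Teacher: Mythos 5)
Your argument is correct, but it reaches nonemptiness of the family $\mathcal{F}$ by a different witness than the paper. The paper's proof takes a spanning tree of the connected graph $G$: since every tree admits a $J$-colouring (Theorem~\ref{Thm-2.1}), deleting the at most $p-(n-1)$ non-tree edges already lands in $\mathcal{F}$, and minimality then follows exactly as in your finiteness step. Your witness is instead the full edge set $E(G)$, giving the null graph $\mathfrak{N}_n$ with $\J(\mathfrak{N}_n)=1$ by the Section~2 convention. What the paper's route buys is a quantitative bound ($|E''|\leq p-(n-1)$) and a witness that is a connected spanning subgraph, which fits the paper's standing assumption that graphs are connected and feeds naturally into the later discussion of the bonding variables $r^-_k$ and $r^+_k$; what your route buys is complete elementarity, needing only the null-graph convention rather than the tree result. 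Two small points to tidy: your recursive descent produces an \emph{inclusion}-minimal member of $\mathcal{F}$, whereas the paper's phrasing (``minimal number of edges'') suggests minimum cardinality --- for that, simply select a member of $\mathcal{F}$ of smallest size, which exists by the same finiteness; and the automorphism remark about non-uniqueness is dispensable, since, as you note, ``not necessarily unique'' is a caveat rather than a claim to be proved.
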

\begin{proof}
Since any connected graph $G$ of order $n$ and size $\varepsilon(G)=p$ has a spanning subtree and any tree admits a $J$-colouring, at most $p-(n-1)$ edges must be removed from $G$. Therefore, if $p-(n-1)$ is not a minimal number of edges to be removed then a minimal set of edges $E'$, $|E'| < p - (n-1)$ must exist whose removal results in a spanning subgraph $G'$ which allows a $J$-colouring.
\end{proof}

It is obvious from Lemma \ref{Lem-4.1} that the restriction of connectedness can be relaxed if $G=\bigcup H_i,\ 1\leq i \leq t$ and it is possible that $\J(H_i-E"_i)_{\forall i}=k$, $k$ some integer constant.

It is obvious that for a complete graph $K_n$, $\J(K_n) = n$. To ensure $\J(K_n) = n$, no edges can be removed. Therefore, $r^-_n(K_n) = r^+_n(K_n) = 0$.

\begin{theorem}\label{Thm-4.1}
For a complete graph $K_n$, $n\geq 1$ we have
\begin{enumerate}\itemsep0mm 
\item[(i)] For $n$ is even and $\frac{n}{2}\leq k \leq n$ and $\J(K_n-E'_k)=k$, then $r^-_k(K_n) = n-k$.
\item[(ii)]  For $n$ is odd and $\lceil \frac{n}{2}\rceil \leq k \leq n$, and $\J(K_n - E'_k) = k$, then $r^-_k(K_n) = n-k$.
\item[(iii)]  For $n \in \N$ and $1\leq k \leq n$, and $\J(K_n - E'_k) = k$, then $r^+_k(K_n) = \frac{1}{2}(n+1-k)(n-k)$.
\end{enumerate}
\end{theorem}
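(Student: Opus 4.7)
I would tackle the minimum-edge parts (i), (ii) together via a matching construction and then handle the maximum part (iii) via a join construction.

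\textbf{Parts (i) and (ii).} For achievability of $r^-_k(K_n) \le n-k$, take $E' = M$, a matching of size $n-k$ in $K_n$; this fits precisely when $k \ge n/2$ for even $n$ and $k \ge \lceil n/2\rceil$ for odd $n$. Colour $K_n - M$ by assigning a common colour to each matched pair (using $n-k$ colours) and a distinct fresh colour to each of the $2k-n$ unmatched vertices, giving $k$ colours in total. A vertex $v$ is adjacent in $K_n - M$ to every other vertex except its matched partner, whose colour equals $c(v)$, so $N[v]$ touches every colour class; splitting any matched pair to introduce a $(k+1)$-st colour leaves the split endpoint missing its partner's old colour, so the colouring is maximal, and $\J(K_n - M) = k$. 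For the matching lower bound, $\chi(K_n - E'_k) \le \J(K_n - E'_k) = k$ forces a partition $V(K_n) = V_1 \cup \cdots \cup V_k$ into independent sets of $K_n - E'_k$, and every edge inside any $V_i$ lies in $E'_k$, so $|E'_k| \ge \sum_i \binom{|V_i|}{2}$. A short convexity check shows the right-hand side is minimised at $n-k$—using $n-k$ pairs and $2k-n$ singletons—exactly when $k \ge \lceil n/2\rceil$, giving $r^-_k(K_n) \ge n-k$.

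\textbf{Part (iii).} For achievability, pick $S \subseteq V(K_n)$ with $|S| = n-k+1$ and let $E'$ be the full edge set of $K_n[S]$, so $K_n - E' \cong K_{k-1} + \mathfrak{N}_{n-k+1}$. Colouring the $K_{k-1}$ side with $c_1, \ldots, c_{k-1}$ and all of $S$ with $c_k$ is plainly a $J$-colouring, and $\J(K_{k-1} + \mathfrak{N}_{n-k+1}) \le \delta + 1 = k$ forces maximality. This gives $r^+_k(K_n) \ge \binom{n-k+1}{2} = \tfrac{1}{2}(n+1-k)(n-k)$. For the upper bound I would fix an arbitrary $J$-colouring of $G = K_n - E'$ with classes $V_1, \ldots, V_k$ of sizes $n_1, \ldots, n_k$ and decompose
\[
|E'| \;=\; \sum_{i=1}^k \binom{n_i}{2} \;+\; \sum_{i<j} X_{ij},
\]
where $X_{ij}$ counts the cross-class deleted edges between $V_i$ and $V_j$. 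The rainbow requirement that each $v \in V_i$ must retain a $G$-neighbour in every other $V_j$ bounds each $X_{ij}$ in terms of $n_i, n_j$, and I would then maximise the above expression over partitions of $n$ into $k$ positive parts, aiming to show the skewed partition $(n-k+1, 1, \ldots, 1)$ dominates.

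\textbf{Main obstacle.} The real work lies in this last optimisation: $\sum \binom{n_i}{2}$ is convex and rewards concentrating the excess mass in a single class, but the cross-class sum $\sum X_{ij}$ can grow once several classes have size $\ge 2$. A careful convexity/exchange argument comparing the marginal gain from enlarging a class against the cross-class edges forfeited elsewhere is needed to pin down the extremum, and this is the step where I expect the bulk of the proof effort to sit.
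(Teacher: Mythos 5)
Your treatment of (i) and (ii) is sound and in fact more complete than the paper's: the achievability construction (delete a matching of size $n-k$, colour matched pairs alike and give each unmatched vertex a fresh colour) is exactly the paper's construction, and your lower bound via $\chi(K_n-E'_k)\le \J(K_n-E'_k)=k$ together with convexity of $\sum_i\binom{n_i}{2}$ supplies the converse that the paper only asserts. One small repair: maximality should be argued against an arbitrary proper colouring with more than $k$ classes, not only against colourings obtained by splitting yours; this is immediate, since every independent set of $K_n-M$ is a single vertex or a matched pair, so more than $k$ classes forces some matched vertex into a singleton class, whose partner then fails to lie in a rainbow neighbourhood.

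The genuine gap is in (iii). What you actually prove is only $r^+_k(K_n)\ge \binom{n-k+1}{2}$, via the clique-deletion (join) construction, which is also the paper's construction; the matching upper bound -- that no larger edge set can leave a graph with $\J=k$ -- is left as a proposed optimisation over partitions, and you yourself flag it as unfinished. That is where the entire content of (iii) sits, and it cannot be waved through: the cross-class terms $X_{ij}$ are not easily controlled, and if disconnected remainders are admitted (as the paper's own case $k=1$ requires, since there $K_n-E'_k=\mathfrak{N}_n$ with $\J(\mathfrak{N}_n)=1$), the claimed extremal value can actually be beaten. For instance, deleting the four edges of a $4$-cycle from $K_4$ leaves $2K_2$, every vertex of which lies in a rainbow neighbourhood under the obvious $2$-colouring and which admits no rainbow $3$-colouring, so $\J(2K_2)=2$ and $r^+_2(K_4)\ge 4>3=\binom{3}{2}$. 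Hence the skewed partition $(n-k+1,1,\dots,1)$ does not dominate without an extra hypothesis (such as connectedness of $K_n-E'_k$, which is however inconsistent with the $k=1$ case), and your planned convexity/exchange argument will run into this obstruction. For comparison, the paper's own proof of (iii) consists of the same construction plus an appeal to ``immediate mathematical induction,'' so your proposal reproduces the paper's achievability step but, as it stands, does not establish the stated equality.
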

\begin{proof}
(i) For $n$ is even and $\frac{n}{2}\leq k \leq n$, exactly $0~or~1~ or~ 2~ or~ 3~or\cdots or~ \frac{n}{2}$ edges between distinct pairs of vertices can be removed to obtain $\J(K_n-E'_k) = n, n-1, n-2, \dots, \frac{n}{2}$. Hence, $r^-_k(K_n) = 0, 1, 2, 3, \dots, \frac{n}{2}$. In other words $r^-_k(K_n) = n - k$, $\frac{n}{2}\leq k \leq n$.

(ii) The result follows through similar reasoning as that in (i).

(iii) In any clique of order $t$, the removal of the $\frac{1}{2}t(t-1)$ edges is the maximum number of edges whose removal renders $\J(\mathfrak{N}_t) =1$ hence, all vertices can be coloured say, $c_1$. Through immediate mathematical induction it follows that we iteratively remove the maximum number of edges $r^+_k(K_n) = 0, 1, 3, 6, 10,\dots, \frac{1}{2}(n+1-k)(n-k)$, $1\leq k\leq n$  of cliques $K_1, K_2, K_3, \dots, K_n$ to obtain $\J(K_n-E'_k) = n, n-1, n-2, \dots,1$. Hence, the result follows.
\end{proof}

\begin{theorem}\label{Thm-4.2}
A graph $G$ of order $n$ which allows a $J$-colouring, has $r^-_k(G) =r^+_k(G)$ if and only if $\J(G)=2$.
\end{theorem}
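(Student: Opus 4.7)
The proof splits naturally into the two implications of the biconditional.

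For the forward direction, assume $\J(G)=2$. The admissible values for $k$ are $k=1$ and $k=2$. For $k=1$, observe that $\J(G-E'_1)=1$ forces $G-E'_1$ to be the null graph $\mathfrak{N}_n$ (since any remaining edge compels $\chi\geq 2$ and hence $\J\geq 2$), so $E'_1=E(G)$, giving $r^-_1(G)=r^+_1(G)=|E(G)|$. For $k=2=\J(G)$, the empty removal witnesses $r^-_2(G)=0$; the matching identity $r^+_2(G)=0$ is the pivotal step, which I would argue by showing that any non-empty edge removal from $G$ either produces an isolated vertex --- which cannot yield a rainbow neighbourhood of size $2$ and therefore destroys $J$-admissibility --- or strictly reduces $\J$ below $2$, so no positive $|E'_2|$ can contribute to $r^+_2(G)$.

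For the reverse direction, I take the contrapositive: assume $\J(G)=\ell\geq 3$ and exhibit some $k$ with $r^-_k(G)<r^+_k(G)$. My plan is to transport the two extremal constructions for $K_n$ given in Theorem \ref{Thm-4.1} into a sufficiently structured subgraph of $G$. Since $\chi(G)\leq \J(G)\leq \delta(G)+1$, a witnessing $\ell$-clique-like structure is forced around any vertex realising the minimum degree, and inside it one can realise both a small matching-style deletion of size $\ell-k$ (merging colour classes in pairs) and a large clique-wipe deletion of size $\binom{\ell-k+1}{2}$ (collapsing a sub-clique of size $\ell-k+1$ onto a single colour class). Choosing $k=\ell-2$ gives deletion sizes $2$ and $3$ respectively, producing the desired strict inequality.

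The main obstacle is exporting the clique-only constructions of Theorem \ref{Thm-4.1} to an arbitrary $G$ without breaking $J$-admissibility. In particular, a clique-wipe inside $G$ can create isolated vertices or push $\J(G-E')$ strictly below the target $k$. To control this I would apply Lemma \ref{Lem-2.5} to translate between minimal and maximal proper colourings compatible with $\J(G-E')=k$, combined with the bound $\J(H)\leq \delta(H)+1$ to track how the minimum degree evolves after each deletion. The borderline case $\ell=3$ is the most delicate, since the matching and clique-wipe strategies can collide at the only non-trivial value $k=2$; I would treat this by isolating a vertex $v$ of degree exactly $\J(G)-1$ and arguing directly inside $N[v]$ that two distinct deletion patterns of different cardinalities both reduce $\J$ to $2$, thereby separating $r^-_2(G)$ from $r^+_2(G)$.
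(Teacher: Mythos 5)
Your forward direction breaks at its self-declared pivotal step. The claim that any non-empty edge removal from a graph with $\J(G)=2$ either creates an isolated vertex or pushes $\J$ strictly below $2$ is false: take $G=C_4$ and delete one edge to get $P_4$, which has no isolated vertex and still satisfies $\J(P_4)=2$; deleting a perfect matching gives $2K_2$, where every vertex again lies in a rainbow neighbourhood on two colours. So your argument that $r^+_2(G)=0$ collapses, and this is exactly the case that needs genuine care. The paper's own proof of this direction sidesteps $k=2$ entirely: it observes that when $\J(G)=2$ every edge joins the two colour classes, so the only deletion reaching $\J=1$ is the removal of all of $E(G)$, making minimum and maximum coincide at $|E(G)|$; nothing in your $k=1$ computation is in dispute, but the $k=2$ identity you rely on is not proved and, as stated, is not true.

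The reverse direction has a more structural gap: your plan to transport the two extremal constructions of Theorem \ref{Thm-4.1} (a matching-style deletion of size $\ell-k$ and a clique-wipe of size $\binom{\ell-k+1}{2}$) presupposes an $\ell$-clique-like structure around a minimum-degree vertex whenever $\J(G)=\ell\geq 3$. No such structure is forced: $C_9$ has $\J(C_9)=3$ and is triangle-free (and $C_6$ is even bipartite with $\J(C_6)=3$), so there is no sub-clique in which to perform either deletion, and neither Lemma \ref{Lem-2.5} nor the bound $\J(H)\leq\delta(H)+1$ manufactures one. The obstacles you list (isolated vertices, $\J$ overshooting the target $k$, the borderline $\ell=3$) are precisely the unproved content, so the sketch does not close. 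The paper's converse takes a different, clique-free route: assuming $r^-_k(G)=r^+_k(G)$ and a nonempty third colour class, it deletes a single edge incident with colour $c_3$, recolours with $c_1$ or $c_2$, and contrasts this one-edge deletion with strictly larger deletions achieving the same value of $\J$, reaching a contradiction. Whatever one thinks of the rigour of that argument, it does not require the clique scaffolding your proposal depends on, and your proposal as written establishes neither implication.
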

\begin{proof}
If $\J(G) = 2$ then all edges are incident with colours $c_1, c_2$. Therefore all edges must be removed to obtain the null graph $\mathfrak{N}_0$ for which $\J(\mathfrak{N}_0) =1$. Hence, $r^-_k(G) =r^+_k(G)$.

Conversely, let $r^-_k(G)=r^+_k(G)$. Then, assume that at least one edge say, $e$ is incident with colour $c_3$. It implies that $G$ contains at least a triangle or an odd cycle. Therefore, $\varepsilon(G) \geq 3$. To ensure a proper colouring on removing edge $e$ the colour $c_3$ must change to either $c_1$ or $c_2$ which is always possible. If $\J(G-e) = 2$ then $ r^+_k(G) = 1$ which is a contradiction because any one additional edge may have been removed, implying $ r^+_k(G) \geq 2$. For colours $c_4,c_5,c_6,\dots, \J(G)$, similar contradictions follows through immediate induction. Therefore, if $r^-_k(G) = r^+_k(G)$ then, $\J(G) = 2$.
\end{proof}

\section{Conclusion}

Clearly the cycles for which the the middle graphs  admit a $J$-colouring in accordance with the second part of Proposition \ref{Prop-3.7}(ii) require to be characterised if possible. It follows from Theorem \ref{Thm-4.2} that for the cases $n$ is even and $1 \leq k <\frac{n}{2}$, or $n$ is odd and $1 \leq k < \lceil \frac{n}{2}\rceil$, determining $r^-_k(K_n)$ remains open. It is suggested that an algorithm must be described to obtain these values.

\begin{example}{\rm 
For the complete graph $K_9$ with vertices $v_1,v_2,v_3,\dots, v_9$, Theorem \ref{Thm-4.1}(ii) admits the minimum removal of $r^-_{n,k}(K_n) = 4$ edges to obtain $\J(K_n - E'_k) = 5$. Without loss of generality say the edges were. $v_1v_2$, $v_3v_4$, $v_6v_6$, $v_7v_8$. To obtain $\J(K_n - E'_k) = 4$ we only remove without loss of generality say, the edges $v_7v_9$, $v_8v_9$. To obtain $\J(K_n - E'_k) = 3$ we only remove without loss of generality say, the edges $v_1v_3$, $v_1v_4$, $v_2v_3$, $v_2v_4$. To obtain $\J(K_n - E'_k) = 2$ we only remove without loss of generality say, the edges $v_5v_7$, $v_5v_8$, $v_5v_9$, $v_6v_7$, $v_6v_8$, $v_6v_9$. To obtain $\J(K_n - E'_k) = 1$ we remove all remaining edges. It implies that as $\J(K_n - E'_k)$ iteratively ranges through the values $5, 4, 3, 2, 1$ the value of $r^-_k(K_9)$ ranges through, $4, 6, 10, 16, 36$.}
\end{example}

Determining the range of minimum (maximum) rainbow bonding variables for other classes of graphs is certainly worthy research. For a graph $G$ which does not allow a $J$-colouring it follows from Lemma \ref{Lem-4.1} that a study of $r^-_k(G')$ and $r^+_k(G')$ with $G'$ a maximal spanning subgraph of $G$ which does allow a $J$-colouring, is open.

\end{document}